\newcommand{\Sym}{\text{\rm Sym}}
\DeclareMathOperator*{\Ker}{Ker}
\newcommand{\E}{\mathbb E}
\newcommand{\R}{\mathbb{R}}
\newcommand{\N}{\mathbb{N}}
\renewcommand{\P}{\mathbb{P}}
\newcommand{\conv}{\mathop{\mathrm{Conv}}\nolimits}
\newcommand{\eps}{\varepsilon}
\newcommand{\eqdistr}{\stackrel{d}{=}}
\newcommand{\eqdef}{\stackrel{def}{=}}
\newcommand{\ind}{\mathbbm{1}}
\newcommand{\dd}{{\rm d}}
\theoremstyle{plain}
\newtheorem{theorem}{Theorem}[section]
\newtheorem{lemma}[theorem]{Lemma}
\newtheorem{corollary}[theorem]{Corollary}
\theoremstyle{definition}
\newtheorem{example}[theorem]{Example}
\theoremstyle{remark}
\newtheorem{remark}[theorem]{Remark}
\newcommand{\stirling}[2]{\genfrac{[}{]}{0pt}{}{#1}{#2}}
\begin{document}

\author{Zakhar Kabluchko}
\address{Zakhar Kabluchko, Institut f\"ur Mathematische Stochastik,
Universit\"at M\"unster,
Orl\'eans--Ring 10,
48149 M\"unster, Germany}
\email{zakhar.kabluchko@uni-muenster.de}

\author{Vladislav Vysotsky}
\address{Vladislav Vysotsky, University of Sussex, Pevensey 2 Building, BN1 9RH Brighton,
United Kingdom and St.\ Petersburg Department of Steklov Mathematical Institute, Fontanka~27,
191011 St.\ Petersburg,
Russia}
\email{v.vysotskiy@sussex.ac.uk, vysotsky@pdmi.ras.ru}

\author{Dmitry Zaporozhets}
\address{Dmitry Zaporozhets, St.\ Petersburg Department of Steklov Mathematical Institute,
Fontanka~27,
191011 St.\ Petersburg,
Russia}
\email{zap1979@gmail.com}

\title[A multidimensional analogue of the arcsine law]{A multidimensional analogue of the arcsine law for the number of positive terms in a random walk}

\begin{abstract}
Consider a random walk $S_i= \xi_1+\ldots+\xi_i$, $i\in\N$, whose increments $\xi_1,\xi_2,\ldots$ are independent identically distributed random vectors in $\R^d$ such that $\xi_1$ has the same law as $-\xi_1$ and $\P[\xi_1\in H] = 0$
for every affine hyperplane $H\subset \R^d$.
Our main result is the distribution-free formula 
$$
\E \left[\sum_{1\leq i_1 < \ldots < i_k\leq n} \mathbbm{1}_{\{0\notin \conv(S_{i_1},\ldots, S_{i_k})\}}\right] = 2 \binom n k \frac {B(k, d-1) + B(k, d-3) +\ldots} {2^k k!},
$$
where the $B(k,j)$'s are defined by their generating function
$
(t+1) (t+3) \ldots (t+2k-1) = \sum_{j=0}^{k} B(k,j) t^j.
$
The expected number of $k$-tuples above admits the following geometric interpretation: it is the expected number of $k$-dimensional faces of a randomly and uniformly sampled open Weyl chamber of type $B_n$ that are not intersected by a generic linear subspace $L\subset \R^n$ of codimension $d$.
The case $d=1$ turns out to be equivalent to the classical discrete arcsine law for the number of positive terms in a one-dimensional random walk with continuous symmetric distribution of increments.
We also prove similar results for random bridges with no central symmetry assumption required.
\end{abstract}

\subjclass[2010]{Primary: 60D05, 52A22; secondary: 60G50, 60G09, 52A23, 52A55, 52C35, 20F55}
\keywords{Arcsine law; random walk; random walk bridge; convex hull; absorption probability; distribution-free probability; random linear subspace; hyperplane arrangement; Weyl chamber; finite reflection group; convex cone}

\thanks{This paper was written when V.V. was affiliated to Imperial College London, where his work was supported by People Programme (Marie Curie Actions) of the European Union's Seventh Framework Programme (FP7/2007-2013) under REA grant agreement n$^\circ$[628803].
His work is also supported in part by Grant 16-01-00367 by RFBR. The work of D.Z.\ is supported in parts by Grant 16-01-00367 by RFBR,
the Program of Fundamental Researches of Russian Academy of Sciences ``Modern Problems of Fundamental Mathematics'', and by Project SFB 1283 of Bielefeld University}

\maketitle

\section{Introduction and main results}\label{sec:main}
\subsection{Introduction}
Let $\xi_1,\ldots,\xi_n\in\R^1$ be i.i.d.\ random variables with continuous, symmetric distribution, that is, $\P[\xi_1=x] =0$ and $\P[\xi_1< -x] = \P[\xi_1 >x]$ for all $x\in\R$. Consider the one-dimensional \textit{random walk}
$S_i := \xi_1+\ldots+\xi_i$, $1\leq i\leq n$.
We are interested in the random variable
\begin{equation}\label{eq:def_N_n}
N_n = \sum_{i=1}^n \ind_{\{S_i >0\}}
\end{equation}
counting the number of positive terms in the random walk. The classical \textit{discrete arcsine law} due to Sparre Andersen~\cite[Theorem~1 and Eq. (8)]{Sparre0} and~\cite[Theorem~4]{Sparre2} 
states that
\begin{equation}\label{eq:arcsine_occupation1}
\P[N_n = m] = \frac 1 {2^{2n}} \binom{2m}{m} \binom{2n-2m}{n-m},
\quad m=0,\ldots, n.
\end{equation}
Passing to the limit gives
\begin{equation}\label{2330}
\lim_{n\to\infty} \P\left[\frac{N_n}{n}\leq x \right] = \frac 2 \pi \arcsin \sqrt x, \quad 0\leq x\leq 1,
\end{equation}
which justifies the name of the law in~\eqref{eq:arcsine_occupation1}.

A discussion of this remarkable result, together with a simplified, purely combinatorial proof,  can be found in Feller's book~\cite[Vol II, Section XII.8]{Feller}. Observe that formula~\eqref{eq:arcsine_occupation1} is distribution-free since the values on the right-hand side do not depend on the distribution of $\xi_1$ as long as this distribution is continuous and symmetric.

The aim of the present paper is to obtain a multidimensional generalization of this result to random walks in $\R^d$. Let us start with a special case.
For $m=0$ and $m=n$, formula~\eqref{eq:arcsine_occupation1} provides an expression for the so-called \textit{persistence probability}
\begin{equation}\label{eq:N_n=0}
\P[N_n=0] =
\P[S_1 < 0,\ldots, S_n < 0]
=
\P[S_1 > 0,\ldots, S_n > 0]
=
\P[N_n=n]
= \frac 1 {2^{2n}}\binom {2n}{n}.
\end{equation}
This formula has been generalized to the $d$-dimensional case in the following way~\cite{KVZ15}. Consider a $d$-dimensional random walk $S_i = \xi_1+\ldots+\xi_i$, $1 \le i \le n$, whose increments $\xi_1,\xi_2,\ldots$ are i.i.d.\ random vectors in $\R^d$, $d \in \N$, with centrally symmetric distribution, that is $\xi_1 \eqdistr -\xi_1$. Denote  by $\conv(x_1,\ldots,x_k)$ the \emph{convex hull} of any $k$ points $x_1,\ldots,x_k\in\R^d$, that is
$$
\conv(x_1,\ldots,x_k) = \{\alpha_1 x_1+\ldots+\alpha_k x_k \colon \alpha_1,\ldots,\alpha_k\geq 0, \alpha_1+\ldots+\alpha_k =1\},
$$
and consider the \textit{non-absorption probability}
$\P[0\notin \conv (S_1,\ldots,S_n)]$. If $d=1$, this probability equals $2\P[N_n=0]$ because $0\notin \conv (S_1,\ldots,S_n)$ if and only if either $S_1>0,\ldots,S_n>0$ or $S_1<0,\ldots,S_n<0$, and the probabilities of these two events are equal by the symmetry assumption. For general $d\in\N$, a distribution-free formula for the non-absorption probability has been  obtained in~\cite{KVZ15} and will be recalled in Example~\ref{ex:absorption_probab} below. In the special case $d=1$, this formula reduces to~\eqref{eq:N_n=0}.

When searching for a multidimensional generalization of~\eqref{eq:arcsine_occupation1} for general $0\leq m\leq n$, the basic question is how to define positivity in $\R^d$.  One possible approach is to declare a vector positive if all of its components are positive. This leads to the question on how much time a random walk or a Brownian motion spends in the positive orthant $\R_+^d$; see the work of Bingham and Doney~\cite{bingham_doney} which contains a review on the higher-dimensional analogues of the arcsine law of this type.
In the present paper we choose a different, coordinate-free approach.

The main idea is that
instead of looking at the random variable $N_n$ itself, we shall find an appropriate generalization of its factorial moments. The advantage of working with factorial moments instead of the usual power moments will become evident later.

Observe that we can rewrite~\eqref{eq:arcsine_occupation1} as follows:
\begin{equation}\label{eq:arcsine_occupation2}
\E \left[\binom{N_n}{k}\right]
=
\sum_{m=k}^n
\frac 1 {2^{2m}} \binom{2m}{m} \binom{2n-2m}{n-m} \binom {m}{k}
=
\frac 1 {2^{2k}} \binom {2k}{k} \binom{n}{k},
\quad k=0,1,\ldots,n.
\end{equation}
We omit a direct proof of the second equality because it will be recovered as a special case of Theorem~\ref{theo:arcsine_multidim} presented below. Note that since $N_n$ takes values in $\{0,\ldots,n\}$, the factorial moments in~\eqref{eq:arcsine_occupation2} determine the law of $N_n$ uniquely and therefore statements~\eqref{eq:arcsine_occupation1} and~\eqref{eq:arcsine_occupation2} are indeed equivalent.
In fact,~\eqref{eq:arcsine_occupation2} can be viewed as a system of $n+1$ linear equations in the unknowns $\P[N_n = i]$, $0\leq i\leq n$,  with a non-degenerate upper triangular matrix.

We shall give our multidimensional arcsine law in the form of a $d$-dimensional version of~\eqref{eq:arcsine_occupation2}.
Our main result is in showing that this statement admits an interpretation in terms of an equivalent geometric problem concerning Weyl chambers intersected by a generic linear subspace; see Theorem~\ref{theo:intersect_weyl_chambers}.
This geometric interpretation seems to be new even in the one-dimensional case of the discrete arcsine law while the proofs of the arcsine law given in~\cite{Sparre0}, \cite{Sparre2}, and~\cite{Feller} are purely combinatorial.

Moreover, in the special one-dimensional case there is a different (and new) geometric interpretation of the discrete arcsine law~\eqref{eq:arcsine_occupation1} itself. Let us describe it. Consider the simplex
\[
\{(\beta_1,\ldots,\beta_n) \in \R^n\colon 1 \geq \beta_1 \geq \beta_2 \geq \ldots \geq \beta_n\geq 0\}
\]
or, equivalently, the convex hull of the following $n+1$ points:
\begin{equation*}
(0,0,\ldots,0),\;\; (1,0,\ldots,0),\;\;(1,1,\ldots,0),\;\; \ldots,\;\;(1,1,\ldots,1).
\end{equation*}
There are $2^n n!$ isometric simplices obtained by applying to the above simplex orthogonal transformations of $\R^n$ that permute the coordinates and change their signs. In other words, these simplices are the closed Weyl chambers of type $B_n$ intersected with the cube $[-1,1]^n$. Their union is exactly $[-1,1]^n$, and the interiors of the simplices are disjoint.

Let $H\subset\R^n$ be any generic open
half-space with the boundary passing through the origin. It will be shown that the discrete arcsine probability in~\eqref{eq:arcsine_occupation1} equals the fraction of the above simplices with exactly $m$ vertices lying in $H$; see Corollary~\ref{cor: number of vertices} in Section~\ref{sec:subspaces_intersect_Weyl_chambers} below.
The asymptotic arcsine law~\eqref{2330} interprets as follows: for any fixed $x\in[0,1]$, the relative fraction of the simplices having at most $x n$ vertices in $H$ tends to $\frac 2 \pi \arcsin \sqrt x$ as $n\to\infty$.

\subsection{Arcsine law for random walks}
We shall give a generalization of~\eqref{eq:arcsine_occupation2} to random walks in $\R^d$.
Let $\xi_1,\ldots,\xi_n$ be random $d$-dimensional vectors. To avoid trivialities,
we always assume that $n\geq d+1$.
The \emph{$d$-dimensional random walk} $(S_i)_{i=1}^n$ with increments $\xi_1,\ldots,\xi_n$ 
is defined by
$$
S_i := \xi_1+\ldots+\xi_i,\;\; 1\leq i\leq n.
$$
We impose the following assumptions
on the increments $\xi_1,\ldots,\xi_n$:
\begin{itemize}
\item[$(\pm\text{Ex})$] \textit{Symmetric exchangeability:} For every permutation $\sigma$ of the set $\{1,\ldots,n\}$ and every $\eps_1,\ldots,\eps_n\in \{-1,+1\}$ there is the distributional equality
    $$
    (\xi_1,\ldots,\xi_n) \eqdistr (\eps_1 \xi_{\sigma(1)}, \ldots, \eps_n \xi_{\sigma(n)}).
    $$
\item[$(\text{GP})$] \textit{General position:}
For every $1\leq i_1 < \ldots < i_d\leq n$, the probability that the vectors $S_{i_1}, \ldots,S_{i_d}$ are linearly dependent, is $0$.
\end{itemize}
\begin{remark}\label{rem:iid}
If $\xi_1,\xi_2,\ldots$ are independent identically distributed in $\R^d$ and such that $\xi_1$ has the same distribution as $-\xi_1$, then~$(\pm\text{Ex})$ is satisfied and the following conditions are equivalent:
\begin{itemize}
\item [(i)] $(\text{GP})$ holds for all $n\geq d+1$;
\item [(ii)] for every affine hyperplane $H\subset \R^d$ we have $\P[\xi_1\in H] = 0$;
\item [(iii)] for every hyperplane $H_0\subset \R^d$ passing through the origin and every $i\in\N$, we have $\P[S_i \in H_0] =0$.
\end{itemize}
This statement is proved in~\cite[Proposition 2.5]{KVZ15} (which does assume that $\xi_1 \eqdistr - \xi_1$ but does not state this explicitly in the published version).
%
\end{remark}

For $1\leq k\leq n$ we are interested in the random variable equal to half the number of polytopes of the form $\conv (S_{i_1},\ldots, S_{i_k})$ that do not contain the origin:
\begin{equation}\label{eq:def_M_n_k}
M_{n,k}^{(d)} = \frac{1}{2} \sum_{1\leq i_1 < \ldots < i_k\leq n} \ind_{\{0\notin \conv(S_{i_1},\ldots, S_{i_k})\}}.
\end{equation}

In the one-dimensional case $d=1$ the convex hull of $S_{i_1},\ldots, S_{i_k}$ does not contain the origin if and only if the numbers $S_{i_1},\ldots, S_{i_k}$ have the same sign, whence
$$
M_{n,k}^{(1)} = \frac 12 \binom {N_n}{k} + \frac 12 \binom {n-N_n}{k} \quad \text{a.s.}
$$
Here we used that $\P[S_{i} = 0] = 0$, $1\leq i \leq n$, which holds by the general position assumption $(\text{GP})$. Using the fact that  $N_n$ has the same distribution as $n-N_n$, which is a consequence of assumption $(\pm\text{Ex})$, we deduce that
\begin{equation} \label{2327}
\E M_{n,k}^{(1)} = \E \left[\binom{N_n}{k}\right].
\end{equation}
Therefore, we can view $\E M_{n,k}^{(d)}$ as a $d$-dimensional generalization of $\E\left[\binom{N_n}{k}\right]$.
Our main result generalizes~\eqref{eq:arcsine_occupation2} to arbitrary dimension as follows.
\begin{theorem}\label{theo:arcsine_multidim}
Consider a random walk $(S_i)_{i=1}^n$ in $\R^d$, $n\geq d+1$, with increments $\xi_1,\ldots,\xi_n$ satisfying assumptions $(\pm\text{Ex})$ and $(\text{GP})$. For every $k=1,\ldots,n$ we have
\begin{equation}\label{eq:E_M_n_k}
\E M_{n,k}^{(d)}  =  \binom n k \frac {B(k, d-1) + B(k, d-3) +\ldots} {2^k k!} = \binom n k \E M_{k,k}^{(d)},
\end{equation}
where the $B(k,j)$'s are defined by their generating function
\begin{equation}\label{eq:def_B_k_j}
(t+1) (t+3) \ldots (t+2k-1) = \sum_{j=0}^{k} B(k,j) t^j.
\end{equation}
We put $B(k,j) = 0$ for $j<0$ and $j>k$ so that the sum in~\eqref{eq:E_M_n_k} has only finitely many non-zero terms.
\end{theorem}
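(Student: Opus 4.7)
The plan is to reduce Theorem~\ref{theo:arcsine_multidim} to the geometric Theorem~\ref{theo:intersect_weyl_chambers} via a Wendel/Sparre Andersen style symmetrization that converts the random walk into a random linear subspace of $\R^n$. Assumption $(\pm\text{Ex})$ says that for every $\eps \in \{-1, +1\}^n$ and every permutation $\sigma$ of $\{1, \ldots, n\}$ the law of $(\xi_1, \ldots, \xi_n)$ agrees with that of $(\eps_1 \xi_{\sigma(1)}, \ldots, \eps_n \xi_{\sigma(n)})$. Hence, for every fixed $(i_1, \ldots, i_k)$ the probability $\P[0 \notin \conv(S_{i_1}, \ldots, S_{i_k})]$ is unchanged if each $S_{i_j}$ is replaced with $S^{(\eps, \sigma)}_{i_j} := \sum_{l=1}^{i_j} \eps_l \xi_{\sigma(l)}$. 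Averaging this equality over all $2^n n!$ pairs $(\eps, \sigma)$ and using linearity of expectation gives
$$
2 \E M_{n,k}^{(d)} \;=\; \frac{1}{2^n n!} \, \E \left[\, \sum_{\eps, \sigma} \sum_{1 \le i_1 < \ldots < i_k \le n} \ind_{\{ 0 \notin \conv(S^{(\eps,\sigma)}_{i_1}, \ldots, S^{(\eps,\sigma)}_{i_k})\}} \right].
$$

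Next I would geometrize. Conditional on $(\xi_1, \ldots, \xi_n)$, define $L \colon \R^n \to \R^d$ by $L(a_1, \ldots, a_n) := a_1 \xi_1 + \ldots + a_n \xi_n$; by $(\text{GP})$, $L$ is almost surely surjective, so $\Ker L$ is a random linear subspace of $\R^n$ of codimension $d$. Write $S^{(\eps, \sigma)}_i = L(v^{(i)}_{\eps, \sigma})$, where $v^{(i)}_{\eps, \sigma} \in \{-1, 0, +1\}^n$ has $\sigma(l)$-th coordinate $\eps_l$ for $l \le i$ and $0$ otherwise. The points $0, v^{(1)}_{\eps, \sigma}, \ldots, v^{(n)}_{\eps, \sigma}$ are exactly the vertices of the type-$B_n$ closed Weyl chamber indexed by $(\eps, \sigma)$ intersected with $[-1, 1]^n$, and the map $(\eps, \sigma) \mapsto \text{chamber}$ is a bijection onto the collection of $2^n n!$ such chambers (cf.\ the geometric description in the introduction). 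Since the event $\{0 \notin \conv(S^{(\eps,\sigma)}_{i_1}, \ldots, S^{(\eps,\sigma)}_{i_k})\}$ is equivalent to $\{\conv(v^{(i_1)}_{\eps, \sigma}, \ldots, v^{(i_k)}_{\eps, \sigma}) \cap \Ker L = \emptyset\}$, summing over $1 \le i_1 < \ldots < i_k \le n$ and over $(\eps, \sigma)$ enumerates precisely the $k$-dimensional faces of all $2^n n!$ type-$B_n$ Weyl chambers whose relative interior is disjoint from $\Ker L$.

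Now I would invoke Theorem~\ref{theo:intersect_weyl_chambers}. Combining $(\text{GP})$ with $(\pm\text{Ex})$ shows that $\Ker L$ is almost surely in generic position with respect to every type-$B_n$ Weyl chamber simultaneously: for each $(\eps, \sigma)$ the walk $(S^{(\eps, \sigma)}_i)$ has the same distribution as $(S_i)$, so linear independence of the $L$-images of any $d$ rays of the chamber reduces to $(\text{GP})$ for the original walk, and a union bound over finitely many chambers suffices. Theorem~\ref{theo:intersect_weyl_chambers} then evaluates the total face count on the right to $2^n n! \cdot 2 \binom n k (B(k, d-1) + B(k, d-3) + \ldots)/(2^k k!)$, and dividing by $2 \cdot 2^n n!$ yields~\eqref{eq:E_M_n_k}. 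The second equality $\E M_{n,k}^{(d)} = \binom n k \E M_{k,k}^{(d)}$ is the trivial specialization of the first to $n = k$.

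The main obstacle is Theorem~\ref{theo:intersect_weyl_chambers} itself: the probabilistic content of Theorem~\ref{theo:arcsine_multidim} is fully absorbed into the symmetrization above, and what remains is a deterministic enumeration problem for generic codimension-$d$ sections of the Coxeter arrangement of type $B_n$. Establishing that this count is governed by the generating function identity $(t+1)(t+3) \cdots (t+2k-1) = \sum_j B(k, j) t^j$ will presumably require a recursion on the number of rays of the chamber---so that the odd factors $t + 2j - 1$ enter naturally---combined with an inclusion-exclusion argument reflecting the sign-change symmetry that distinguishes type $B_n$ from type $A_{n-1}$.
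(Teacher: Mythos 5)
Your reduction of Theorem~\ref{theo:arcsine_multidim} to Theorem~\ref{theo:intersect_weyl_chambers} is correct and is essentially the paper's first proof: your operator $L$ is the paper's $A$, your vectors $v^{(i)}_{\eps,\sigma}$ are exactly the generators of the $k$-faces $C^B_{\eps,\sigma}(i_1,\ldots,i_k)$, and the equivalence between $0\in \conv(S^{(\eps,\sigma)}_{i_1},\ldots,S^{(\eps,\sigma)}_{i_k})$ and the non-trivial intersection of the corresponding face with $\Ker A$ is the content of Lemma~\ref{lem:geometric_interpretation}. Two small imprecisions: the relevant condition on a closed face $F$ is $F\cap \Ker A=\{0\}$, not that the relative interior of $F$ is disjoint from $\Ker A$ (for $k>d+1$ these can differ, although the condition you actually verify, $\conv(v^{(i_1)},\ldots,v^{(i_k)})\cap\Ker A=\emptyset$, is the correct one); and the general position of $\Ker A$ must be checked against every subspace of the lattice $\mathcal L(B_n)$ in the sense of~\eqref{eq:def_gen_pos}, which is \cite[Lemma~6.3]{KVZ15} rather than a one-line union bound over chambers, though your sketch points the right way.

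The genuine gap is that Theorem~\ref{theo:intersect_weyl_chambers} is left unproven. For $1\le k<n$ it is a new result of this paper --- only its special case $k=n$ (Theorem~\ref{theo:intersect_weyl_chambers_known}) is available from \cite{KVZ15} --- so your reduction absorbs the probabilistic content but leaves the essential combinatorial work undone, and the strategy you gesture at (recursion on the number of rays plus inclusion--exclusion) is not what is needed and is not developed. The actual argument is a double count reducing general $k$ to the known case $k=n$: each $k$-face $Q_{I,\eta}$ spans a $k$-dimensional lattice subspace $V_{I,\eta}$ that is tiled by the $2^k k!$ Weyl chambers of type $B_k$ of the induced arrangement, with $Q_{I,\eta}$ one of them; general position makes $L\cap V_{I,\eta}$ a codimension-$d$ generic subspace of $V_{I,\eta}$, so Theorem~\ref{theo:intersect_weyl_chambers_known} applied inside $V_{I,\eta}$ counts the trivially intersected chambers there; weighting by the multiplicities $(\# I_1)!\cdots(\# I_{k+1})!\,2^{\# I_{k+1}}$ (the number of ambient chambers sharing the face $Q_{I,\eta}$) and dividing by the $2^k k!$ ways each face arises as a chamber of some $V_{I',\eta'}$ gives the formula. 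Thus the polynomials $B(k,j)$ enter only through the already-known $k$-dimensional chamber count, and no new recursion on the $B_n$ arrangement is required. If you want to bypass Theorem~\ref{theo:intersect_weyl_chambers} entirely, the paper's second proof instead reshuffles the block sums $\xi'_l=\xi_{i_{l-1}+1}+\ldots+\xi_{i_l}$ by an independent uniform permutation to restore $(\pm\text{Ex})$ and applies the non-absorption formula of Theorem~\ref{theo:absorption} directly.
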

\begin{example}
For $d=1$ Theorem~\ref{theo:arcsine_multidim} reduces to~\eqref{eq:arcsine_occupation2} in view of $B(k,0) = (2k-1)!!$:
$$
\E \left[\binom{N_n}{k}\right] = \E M_{n,k}^{(1)}  = \binom n k \frac {(2k-1)!!} {2^k k!}  =\frac 1 {2^{2k}} \binom {2k}{k} \binom{n}{k}.
$$
Note in passing that this proves the second equality in~\eqref{eq:arcsine_occupation2}.
\end{example}
\begin{example}\label{ex:absorption_probab}
In the case $k=n$ Theorem~\ref{theo:arcsine_multidim} provides a formula for the non-absorption probability
\begin{equation}\label{eq:absorption_B_n}
\P[0\notin \conv(S_1,\ldots,S_n)]
=
\frac {2(B(n, d-1) + B(n, d-3) +\ldots)} {2^n n!}.
\end{equation}
This formula was obtained in~\cite{KVZ15}.
\end{example}
\begin{remark}
For the number of polytopes of the form $\conv (S_{i_1},\ldots,S_{i_k})$ \textit{containing} the origin we have the formula
\begin{equation}\label{eq:expected_contain_origin}
\E \left[\sum_{1\leq i_1 < \ldots < i_k\leq n} \ind_{\{0\in \conv(S_{i_1},\ldots, S_{i_k})\}}\right]
=
2 \binom n k \frac {B(k, d+1) + B(k, d+3) +\ldots} {2^k k!}
\end{equation}
which follows from~\eqref{eq:E_M_n_k} and the identities
$$
B(k,2)+B(k,4)+\ldots = B(k,1) + B(k,3)+\ldots = 2^{k-1}k!.
$$
To prove these identities, take $t=\pm 1$ in~\eqref{eq:def_B_k_j}. Note that for $1\leq k \leq d$ both sides of~\eqref{eq:expected_contain_origin} vanish (the left hand-side vanishes by assumption $(\text{GP})$).
\end{remark}

Let us now pass to the large $n$ limit. The classical arcsine law~\cite{erdoes_kac,Sparre0} for the number of positive terms in a one-dimensional random walk (whose increments are symmetrically distributed or have zero mean and finite positive variance, the case not considered in our paper) can be stated as follows:
\begin{equation} \label{1547}
\lim_{n\to\infty} \P\left[\frac{N_n}{n}\leq x \right] = \frac 2 \pi \arcsin \sqrt x, \quad 0\leq x\leq 1.
\end{equation}
In terms of moments (which fully define any distribution concentrated on $[0,1]$), this can equivalently be written as
\begin{equation} \label{2344}
\lim_{n\to\infty} \E \left[\frac{N_n^k}{n^k}\right] = \frac 1 {2^{2k}} \binom {2k}{k}, \quad k\in\N.
\end{equation}
Note that
\begin{equation} \label{eq: N_n moments}
\E N_n^k=k!\E \left[\binom{N_n}{k}\right](1+o(1)),\quad n\to\infty,
\end{equation}
which together with~\eqref{2327} implies that~\eqref{2344} is equivalent to
\[
\lim_{n\to\infty} \E \left[\frac{k! M_{n,k}^{(1)}}{n^k}\right] =\frac 1 {2^{2k}} \binom {2k}{k}, \quad k\in\N.
\]
From Theorem~\ref{theo:arcsine_multidim} we obtain the following $d$-dimensional generalization of \eqref{eq: N_n moments}:
\begin{equation} \label{eq:limit E_M_n_k}
\lim_{n\to\infty} \E \left[\frac{k! M_{n,k}^{(d)}}{n^k}\right] = \frac {B(k,d-1) + B(k, d-3)+\ldots}{2^k k!} = \E M_{k,k}^{(d)}, \quad k\in\N.
\end{equation}


\begin{remark}\label{1315}
Thus, in the $d$-dimensional setting, the sequence in \eqref{eq:limit E_M_n_k} is analogous to the sequence of moments of the arcsine distribution. However, unlike the one-dimensional case, for $d \ge 2$ this sequence does not correspond to a distribution on $[0,1]$ because  both the first and the second moments of such hypothetical distribution should be $\frac12$. On the other hand, it allows the following natural interpretation.

Let $W^{(d)}(t)$, $t\in [0,1]$, be a standard Brownian motion taking values in $\R^d$. Consider the random variable
$$
M_{\infty, k}^{(d)} := \frac 12 \int_{0< t_1<\ldots < t_k < 1} \ind_{\{ 0\notin \conv(W^{(d)}(t_1),\ldots, W^{(d)}(t_k)) \}} \dd t_1 \dots \dd t_k.
$$
Note that if $U_1,\ldots,U_k$ are i.i.d.\ random variables distributed uniformly on $[0,1]$ and independent of $W^{(d)}$, then
$$
\E M_{\infty, k}^{(d)} = \frac{1}{2\cdot k!} \P[0\notin \conv(W^{(d)}(U_1),\ldots, W^{(d)}(U_k))].
$$
Let $k$ and $d$ be fixed, while $n\to\infty$. Using Donsker's invariance principle, it is possible to show that  $\frac{1}{n^k} M_{n,k}^{(d)}$ converges weakly (together with all moments) to $M_{\infty, k}^{(d)}$.
Hence, by \eqref{eq:limit E_M_n_k},
\begin{multline} \label{eq: Bm interpretation}
\lim_{n\to\infty} \E \left[\frac{k! M_{n,k}^{(d)}}{n^k}\right] = k! \E M_{\infty,k}^{(d)} = \E M_{k,k}^{(d)} = \frac{1}{2}\P[0\notin\conv(W^{(d)}(U_1),\dots,W^{(d)}(U_k))].
\end{multline}

The same result may be obtained by observing that the increments of the sequence $W^{(d)}(U_{(1)}),\ldots, W^{(d)}(U_{(k)})$, where $U_{(1)},\ldots,U_{(k)}$ are the order statistics of $U_1,\ldots,U_k$, are exchangeable, and applying Example 1.4 directly. This explains why $\E M_{k,k}^{(d)}$ appears in \eqref{eq: Bm interpretation}. We also note that the equality $\E M_{k,k}^{(1)} = k! \E M_{\infty,k}^{(1)}$ implies directly that $\E M_{k,k}^{(1)}$ is the $k$-th moment of the arcsine law. This is easily seen from the definition of $M_{\infty, k}^{(1)}$ and the following arcsine law for the Brownian motion:
$$\P \left [\int_0^1 \ind_{\{ W^{(1)}(t)>0 \}} \dd t \le x \right] = \frac 2 \pi \arcsin \sqrt x, \quad 0\leq x\leq 1.$$
\end{remark}

\begin{remark}
Theorem~\ref{theo:arcsine_multidim} shows that the \emph{expectation} of $M_{n,k}^{(d)}$ does not depend on the distribution of increments of the random walk. One may ask whether the \emph{distribution} of $M_{n,k}^{(d)}$ has the same property. Our simulations gave a strong evidence against this conjecture.
\end{remark}

We shall give two proofs of Theorem~\ref{theo:arcsine_multidim}.
The first proof deduces Theorem~\ref{theo:arcsine_multidim} from a geometric result on the number of $k$-faces of a random Weyl chamber that are intersected by a linear subspace. This result, which is of an independent interest, will be stated in Theorem~\ref{theo:intersect_weyl_chambers}, Section~\ref{sec:subspaces_intersect_Weyl_chambers}. The second proof, given in Section~\ref{sec:proofs_B_n}, is based on~\eqref{eq:absorption_B_n}.
Both proofs strongly rely on the ideas and results of~\cite{KVZ15}. The second proof is shorter but it does not allow any interpretation of Theorem~\ref{theo:arcsine_multidim}. In the next section, a similar statement for random bridges  is formulated in Theorem~\ref{theo:arcsine_multidim_Br} (which will be proved in Section~\ref{017}).

\subsection{Uniform law for random bridges}
Similar results can be obtained for random bridges which are essentially random walks required to return to the origin after $n$ steps.
Let $\xi_1,\ldots,\xi_n$ be random vectors in $\R^d$, where the reader may always assume that $n\geq d+2$ to avoid trivialities.
We define the partial sums $(S_i)_{i=1}^n$  by
$$
S_i:= \xi_1+\ldots+\xi_i, \quad 1\leq i\leq n, 
$$
and impose the following assumptions on the increments $\xi_1,\ldots,\xi_n$:
\begin{itemize}
\item[$(\text{Br})$] \textit{Bridge property:} $S_n=\xi_1+\ldots+\xi_n = 0$ a.s.
\item[$(\text{Ex})$] \textit{Exchangeability:} For every permutation $\sigma$ of the set $\{1,\ldots,n\}$ we have the distributional equality
$$
(\xi_{\sigma(1)},\ldots,  \xi_{\sigma(n)}) \eqdistr (\xi_1,\ldots,\xi_n).
$$
\item[$(\text{GP}')$] \textit{General position:}
For every $1\leq i_1 < \ldots < i_d \leq n-1$, the probability that the vectors $S_{i_1}, \ldots, S_{i_d}$ are linearly dependent, is $0$.
\end{itemize}
The stochastic process $(S_i)_{i=1}^n$ is called a \emph{random bridge}. Note  that assumption  $(\text{Ex})$ does not require invariance with respect to sign changes and thus is weaker  than $(\pm\text{Ex})$.

For $d=1$ the distribution of the random variable $N_n$ counting the number of positive terms among $S_1,\ldots,S_{n-1}$ is discrete uniform on $\{0,\ldots,n-1\}$ by a result of Sparre Andersen~\cite[Corollary~2]{Sparre1953}. That is,
\begin{equation}\label{eq:arcsine_occupationBr1}
\P[N_n = m] = \frac 1n, \quad m=0,\ldots,n-1.
\end{equation}
Alternatively, this formula follows easily from~\cite[Theorem~2.1]{spitzer}. In terms of factorial moments, \eqref{eq:arcsine_occupationBr1} can be stated as follows:
\begin{equation}\label{eq:arcsine_occupationBr2}
\E \left[\binom{N_n}{k}\right] = \frac 1n \sum_{m=k}^{n-1} \binom m {k} =  \frac 1{k+1} \binom{n-1}{k}, \quad k=0,\ldots, n-1.
\end{equation}
The second equality in~\eqref{eq:arcsine_occupationBr2} follows easily by induction over $n$.
Note that~\eqref{eq:arcsine_occupationBr1} and~\eqref{eq:arcsine_occupationBr2} are equivalent similarly to the case of random walks we seen above.

To state a $d$-dimensional generalization of~\eqref{eq:arcsine_occupationBr2}, we consider a slight modification of $M_{n,k}^{(d)}$, namely
\begin{equation}\label{eq:def_M_n_k_Br}
M_{n,k}^{(d)} = \frac{1}{2} \sum_{1\leq i_1 < \ldots < i_{k}\leq n-1} \ind_{\{0\notin \conv(S_{i_1},\ldots, S_{i_{k}})\}}.
\end{equation}
We excluded the case $i_k=n$ because the corresponding convex hulls would contain $0$ by the assumption $S_n=0$ a.s.
The following is our  main result for random bridges.
\begin{theorem}\label{theo:arcsine_multidim_Br}
Consider a random bridge $(S_i)_{i=1}^n$ in $\R^d$, $n\geq d+2$, whose increments $\xi_1,\ldots,\xi_n$ satisfy assumptions $(\text{Br})$, $(\text{Ex})$, $(\text{GP}')$.
For all $k=1,\ldots,n-1$ we have
\begin{equation}\label{eq:theo:arcsine_multidim_Br}
\E M_{n,k}^{(d)} = \frac 1 {(k+1)!} \binom {n-1} {k} \left(\stirling{k+1}{d} + \stirling{k+1}{d-2} +\ldots\right) =  \binom {n-1} {k} \E M_{k+1, k}^{(d)} ,
\end{equation}
where $\stirling {k+1}{1}, \ldots, \stirling{k+1}{k+1}$ are the Stirling numbers of the first kind defined by the formula
\begin{equation}
t(t+1)\ldots (t+k) = \sum_{j=1}^{k+1} \stirling {k+1}{j}t^j.
\end{equation}
We use the convention $\stirling{k+1}{j} = 0$ for $j<0$ and $j>k+1$, so the sum in~\eqref{eq:theo:arcsine_multidim_Br} contains a finite number of non-zero terms.
\end{theorem}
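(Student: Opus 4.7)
The plan is to prove Theorem~\ref{theo:arcsine_multidim_Br} by adapting the strategy used for Theorem~\ref{theo:arcsine_multidim}, in particular the Weyl-chamber interpretation of Theorem~\ref{theo:intersect_weyl_chambers}, from the hyperoctahedral group $B_n$ to the symmetric group $\mathfrak{S}_n$. This replacement is natural: the random walk hypothesis $(\pm\text{Ex})$ encodes invariance under both permutations and sign flips (the hyperoctahedral symmetry), whereas the bridge hypothesis $(\text{Ex})$ encodes only permutation invariance (the symmetric-group symmetry). The ambient space becomes the sum-zero hyperplane $V := \{y \in \R^n : y_1 + \ldots + y_n = 0\}$, on which $\mathfrak{S}_n$ acts by permuting coordinates, partitioning $V$ into $n!$ open Weyl chambers of type $A_{n-1}$. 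Correspondingly, the numbers $B(k,j)$ get replaced by the unsigned Stirling numbers of the first kind $\stirling{k+1}{j}$, which arise because the braid arrangement has characteristic polynomial $(t-1)(t-2)\cdots(t-(n-1))$ and the Stirling numbers of the first kind are defined by the generating function $t(t+1)\cdots(t+k)$.

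The first step is to state and prove a geometric analogue of Theorem~\ref{theo:intersect_weyl_chambers}: for a generic linear subspace $L \subset V$ of codimension $d$, the average over a uniformly random type-$A_{n-1}$ Weyl chamber of the number of its $k$-dimensional faces disjoint from $L$ equals $\frac{1}{(k+1)!}\binom{n-1}{k}\bigl(\stirling{k+1}{d} + \stirling{k+1}{d-2} + \cdots\bigr)$. Each Weyl chamber carries exactly $\binom{n-1}{k}$ faces of dimension $k$, indexed by ordered compositions of $n$ into $k+1$ positive parts; the fraction (summed over chambers in an orbit) that miss $L$ is extracted from the characteristic polynomial of the braid arrangement via a Zaslavsky-type argument analogous to the one producing the $B(k,j)$-formula in Section~\ref{sec:subspaces_intersect_Weyl_chambers}.

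The second step is the probabilistic reduction. Assemble the increments into $\Xi = (\xi_1,\ldots,\xi_n) \in (\R^d)^n$, which by $(\text{Br})$ lies in the subspace $\{z_1 + \ldots + z_n = 0\}$ and whose law is $\mathfrak{S}_n$-invariant by $(\text{Ex})$. Each subset $\{i_1 < \ldots < i_k\} \subset \{1,\ldots,n-1\}$ corresponds bijectively to an ordered composition $(a_1,\ldots,a_{k+1})$ of $n$ (with $a_j = i_j - i_{j-1}$), which in turn labels one of the $\binom{n-1}{k}$ $k$-faces of the fundamental chamber $\{y_1 \le \ldots \le y_n\} \cap V$. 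Using $(\text{GP}')$ and averaging over the $\mathfrak{S}_n$-orbit of $\Xi$, identify the event $\{0 \in \conv(S_{i_1},\ldots,S_{i_k})\}$ with the event that the $k$-face of the random Weyl chamber (containing $\Xi$) corresponding to the partition $(a_1,\ldots,a_{k+1})$ is intersected by a generic linear subspace of codimension $d$ (built from the $d$-dimensional structure of $\Xi$). This ties $\E M_{n,k}^{(d)}$ to the geometric face count of the first step, and the factor $1/2$ in~\eqref{eq:def_M_n_k_Br} is absorbed by the chamber multiplicity.

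The main obstacle is the precise translation carried out in the second step, specifically identifying the probabilistic event $\{0 \in \conv(S_{i_1}, \ldots, S_{i_k})\}$ with the geometric face-intersection event. This parallels the random walk case but has two new subtleties: the bridge lives in a codimension-$d$ affine slice of $(\R^d)^n$ (so the ambient arrangement must be restricted correctly), and the absence of sign-flip invariance means the $k$-faces must be tracked without the two-sided symmetry that simplifies the type-$B$ analysis. Once these points are handled, the extraction of the alternating sum $\stirling{k+1}{d} + \stirling{k+1}{d-2} + \cdots$ from the characteristic polynomial of the braid arrangement is a Zaslavsky-type computation parallel to the one yielding the $B(k,j)$-formula in Example~\ref{ex:absorption_probab}, and the identity $\E M_{n,k}^{(d)} = \binom{n-1}{k}\E M_{k+1,k}^{(d)}$ is immediate because the per-chamber face count $\binom{n-1}{k}$ factors out of the Weyl-chamber expression.
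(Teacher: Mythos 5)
Your proposal follows essentially the same route as the paper's first proof: a type-$A_{n-1}$ analogue of Theorem~\ref{theo:intersect_weyl_chambers} (this is Theorem~\ref{theo:intersect_weyl_chambers_Br}, proved by the same double-counting over faces $Q_I$ and their spans $W_I$ that you gesture at), combined with the translation of $\{0\in\conv(S_{i_1},\ldots,S_{i_k})\}$ into a face-intersection event for $L_0\cap\Ker A$ (Lemma~\ref{lem:geometric_interpretation_Br}) and averaging over the $\Sym(n)$-orbit. Your bookkeeping differs only by working inside the sum-zero hyperplane with $k$-faces and codimension $d$ rather than in $\R^n$ with $(k+1)$-faces and codimension $d+1$, which is an equivalent reindexing, so the plan is sound and matches the paper.
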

\begin{example}
In the one-dimensional case $d=1$ we have
$$
M_{n,k}^{(1)} = \frac 12 \binom {N_n}{k} + \frac 12 \binom {n-N_n-1}{k} \quad \text{a.s.}
$$
Theorem~\ref{theo:arcsine_multidim_Br} yields
$$
\frac 12 \E \left[\binom {N_n}{k} + \binom {n-N_n-1}{k}\right]
=
\E M_{n,k}^{(1)} = \frac 1 {(k+1)!} \binom {n-1} {k} \stirling{k+1}{1} = \frac 1 {k+1} \binom {n-1} {k}
$$
in view of $\stirling{k+1}{1} = k!$. Then  we recover~\eqref{eq:arcsine_occupationBr2} using the
distributional equality $N_n \eqdistr n-1-N_n$, which itself follows by $(S_i)_{i=1}^n \eqdistr (S_n - S_{n-i})_{i=1}^n \stackrel{\text{a.s.}}{=} -( S_{n-i})_{i=1}^n$, a consequence of $(\text{Ex})$ and $(\text{Br})$.

\end{example}
\begin{example}
In the case $k=n-1$ Theorem~\ref{theo:arcsine_multidim_Br} reduces to the formula for the non-absorption probability
$$
\P[0\notin \conv(S_1,\ldots,S_{n-1})]
=
\frac 2 {n!} \left(\stirling{n}{d} + \stirling{n}{d-2} +\ldots\right)
$$
which was obtained in~\cite{KVZ15}.
\end{example}
As $n\to\infty$, the random variable $N_n/n$ converges weakly to the uniform distribution on the interval $[0,1]$, namely
$$
\lim_{n\to\infty} \P\left[\frac{N_n}{n}\leq x\right] = x, \quad x\in [0,1].
$$
In terms of moments, we can write this as
$$
\lim_{n\to\infty} \E \left[\frac{N_n^k}{n^k}\right] = \frac 1{k+1}, \quad k\in\N,
$$
which is a bridge analogue of~\eqref{2344} and, similarly, equivalent to
\[
\lim_{n\to\infty} \E \left[\frac{k! M_{n,k}^{(1)}}{n^k}\right] =\frac 1{k+1}, \quad k\in\N.
\]
Theorem~\ref{theo:arcsine_multidim_Br} yields the following $d$-dimensional version of this relation:
$$
\lim_{n\to\infty} \E \left[\frac{k! M_{n,k}^{(d)}}{n^k}\right] = \frac 1 {(k+1)!}\left(\stirling{k+1}{d} + \stirling{k+1}{d-2} +\ldots\right) = \E M_{k+1,k}^{(d)}, \quad k\in\N.
$$
Finally, there is the following analogue of \eqref{eq: Bm interpretation}: if $W_0^{(d)}(t)$, $t\in [0,1]$, is a Brownian bridge in $\R^d$ and
$U_1,\ldots,U_k$ are i.i.d.\ random variables distributed uniformly on $[0,1]$ and independent of $W_0^{(d)}$, then
$$
\lim_{n\to\infty} \E \left[\frac{k! M_{n,k}^{(d)}}{n^k}\right] = \E M_{k+1,k}^{(d)} = \frac{1}{2}\P[0\notin\conv(W_0^{(d)}(U_1),\dots,W_0^{(d)}(U_k))].
$$

\section{Relation to linear subspaces intersecting Weyl chambers}\label{sec:subspaces_intersect_Weyl_chambers}
In this section we prove Theorems~\ref{theo:arcsine_multidim} and~\ref{theo:arcsine_multidim_Br} by deducing them from certain geometric results on linear subspaces intersecting Weyl chambers of types $B_n$ and $A_{n-1}$, respectively. We start by recalling the necessary definitions.

\subsection{The reflection group  and Weyl chambers of type $B_n$}\label{2040}
The \emph{reflection group} $\mathcal G(B_n)$ of type $B_n$ acts on $\R^n$ by permuting the coordinates in an arbitrary way and by multiplying any number of coordinates by $-1$. That is, the elements of $\mathcal G(B_n)$ are isometries of the form
$$
g_{\eps,\sigma}:\R^n\to\R^n, \quad (\beta_1,\ldots,\beta_n) \mapsto (\eps_1 \beta_{\sigma(1)},\ldots,\eps_n \beta_{\sigma(n)}),
$$
where $\sigma\in \Sym(n)$ is a permutation of the set $\{1,\ldots,n\}$ and $\eps=(\eps_1,\ldots,\eps_n)\in \{-1,+1\}^n$. Here we denote by $\Sym (n)$ the symmetric group on the set $\{1,\ldots,n\}$.
The group $\mathcal G(B_n)$ is the symmetry group of the $n$-dimensional cube $[-1,1]^n$ and the number of elements in this group is $2^n n!$.

A set $Q\subset \R^n$ is called a \emph{convex cone} if for all $x,x'\in Q$ and $\alpha,\alpha'>0$ we have $\alpha x +\alpha' x' \in Q$. We refer to~\cite{ALMT14,amelunxen_lotz} and~\cite[Section~6.5]{SW08} for information on convex cones and spherical convex geometry. We shall consider only \emph{polyhedral cones}. These are defined as finite intersections of half-spaces whose boundaries pass through the origin.  The faces of the cone are obtained by replacing in the above definition some of the half-spaces by their boundaries and taking the intersection.
The \emph{fundamental Weyl chamber of type $B_n$} is the convex cone given by
$$
\mathcal C(B_n) = \{(\beta_1,\ldots,\beta_n) \in \R^n\colon 0 < \beta_1 < \beta_2 < \ldots < \beta_n\}.
$$
This is a fundamental domain for $\mathcal G(B_n)$, meaning that the cones $g \, \mathcal C(B_n)$, $g\in \mathcal G(B_n)$,  are disjoint and their closures (which will be called \emph{closed Weyl chambers} or, without any risk of confusion, simply \emph{Weyl chambers}) cover the whole $\R^n$. Thus, the closed Weyl chambers are the convex cones given by
$$
C_{\eps,\sigma}^B := \{(\beta_1,\ldots,\beta_n)\in\R^n \colon \eps_1 \beta_{\sigma(1)} \geq  \eps_2 \beta_{\sigma(2)} \geq \ldots\geq \eps_n \beta_{\sigma(n)} \geq 0\},
$$
where $\eps\in \{-1,+1\}^n$, $\sigma\in \Sym(n)$. The terms are required to be non-increasing rather than increasing for convenience of proofs, and the superscript $B$ refers to the type of the chambers.
We denote by $\mathcal F_k(Q)$ the set of all (closed) $k$-dimensional faces of a convex cone $Q$. For $1\leq k\leq n$, the $k$-dimensional faces of $C_{\eps,\sigma}^B$ are indexed by collections $1\leq i_1<\ldots < i_k\leq n$ and have the form
\begin{multline}\label{eq:C_eps_sigma_face}
C_{\eps,\sigma}^B(i_1,\ldots,i_k) :=
\{(\beta_1,\ldots,\beta_n)\in\R^n \colon
\eps_1 \beta_{\sigma(1)} = \ldots = \eps_{i_1}\beta_{\sigma(i_1)}
\\\geq
\eps_{i_1+1} \beta_{\sigma(i_1+1)} =\ldots = \eps_{i_2}\beta_{\sigma(i_2)}
\geq
\ldots
\geq
\eps_{i_{k-1}+1} \beta_{\sigma(i_{k-1}+1)} = \ldots =\eps_{i_k} \beta_{\sigma(i_k)}
\\\geq
\beta_{\sigma(i_k+1)} =\ldots = \beta_{\sigma(n)} =0\}.
\end{multline}
In the case $i_k=n$, no $\beta_i$'s are required to be $0$. In particular, $\# \mathcal F_k(C_{\eps,\sigma}^B) = \binom nk$.

A \textit{hyperplane arrangement} is a finite set of (distinct) hyperplanes in $\R^n$.
The \emph{reflection arrangement $\mathcal A(B_n)$ of type $B_n$} consists of the hyperplanes
\begin{equation}\label{eq:reflection_arrangement}
\{\beta_ i = \beta_j\} \;\; (1\leq i < j\leq n),
\quad
\{\beta_ i = - \beta_j\} \;\; (1\leq i < j\leq n),
\quad
\{\beta_i = 0\} \;\; (1\leq i\leq n).
\end{equation}
The name is due to the fact that  reflections with respect to these hyperplanes generate the group $\mathcal G(B_n)$.
The \emph{lattice} $\mathcal L(B_n)$ generated by the reflection arrangement of type $B_n$ consists of linear subspaces of $\R^n$ which can be represented as intersections of the hyperplanes~\eqref{eq:reflection_arrangement}. We say that a linear subspace $L\subset \R^n$ of codimension $d$ is \emph{in general position} with respect to the reflection arrangement if for every linear subspace $K\in \mathcal L(B_n)$ we have
\begin{equation}\label{eq:def_gen_pos}
\dim (L\cap K) =
\begin{cases}
\dim K - d, &\text{if } \dim K \geq d,\\
0, &\text{if } \dim K \leq d.
\end{cases}
\end{equation}

\subsection{Subspaces intersecting faces of Weyl chambers of type $B_n$}
The next theorem, which is the main result of the present section, will be shown to imply Theorem~\ref{theo:arcsine_multidim}.

\begin{theorem}\label{theo:intersect_weyl_chambers}
Let $L\subset \R^n$ be a deterministic linear subspace of codimension $d$ in general position with respect to the reflection arrangement~\eqref{eq:reflection_arrangement} of type $B_n$. Let $Q$ be sampled randomly and uniformly among the $2^n n!$ closed Weyl chambers $C_{\eps,\sigma}^B$ of type $B_n$. Then the expected number of $k$-dimensional faces of $Q$ intersected  by $L$ in a trivial way is given by
\begin{align*}
\E \left[\sum_{F\in \mathcal F_k(Q)} \ind_{\{F\cap L = \{0\}\}}\right]
&\eqdef
\frac{1}{2^n n!} \sum_{\eps\in \{-1,+1\}^n} \sum_{\sigma\in \Sym(n)}
\sum_{F\in \mathcal F_k(C_{\eps,\sigma}^B)} \ind_{\{F\cap L = \{0\}\}}\\
&=
 2 \binom n k \frac {B(k, d-1) + B(k, d-3) +\ldots} {2^k k!},
\end{align*}
where the $B(k,j)$'s are defined in Theorem~\ref{theo:arcsine_multidim}.
\end{theorem}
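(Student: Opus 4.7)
The plan is to prove the theorem by reducing it to its special case $k = n$, which is equation~\eqref{eq:absorption_B_n} (cf.\ Example~\ref{ex:absorption_probab}, established in~\cite{KVZ15}). The key geometric observation driving the reduction is that every $k$-dimensional face of the $B_n$ arrangement, together with all other $k$-faces sharing its affine hull $W$, forms the collection of Weyl chambers of a naturally induced reflection arrangement of type $B_k$ on $W$. Concretely, I would first represent $L$ by vectors $\xi_1, \ldots, \xi_n \in \R^d$ so that $L = \{\beta \in \R^n : \sum_j \beta_j \xi_j = 0\}$ (these are the images of the standard basis under any surjective $\R^n \to \R^d$ with kernel $L$); the general-position hypothesis on $L$ then translates into a symmetric general-position condition on the $\xi_j$'s. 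For the $k$-face $F = C_{\eps,\sigma}^B(i_1,\ldots,i_k)$, I would parametrize its points by $\gamma_1 \geq \ldots \geq \gamma_k \geq 0$ via $\beta_{\sigma(j)} = \eps_j \gamma_l$ for $j \in A_l := \{i_{l-1}+1,\ldots,i_l\}$ (and $\beta_{\sigma(j)} = 0$ for $j \in A_0 := \{i_k+1,\ldots,n\}$), noting that the constraint $\beta \in L$ becomes $\sum_l \gamma_l V_l = 0$ with $V_l := \sum_{j \in A_l} \eps_j \xi_{\sigma(j)}$. A convex-cone manipulation (write $\gamma_l = \delta_l + \ldots + \delta_k$ with $\delta_m \geq 0$ and form partial sums $T_m := V_1+\ldots+V_m$) then shows that $F \cap L = \{0\}$ if and only if $0 \notin \conv(T_1,\ldots,T_k)$.

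The crucial step is the identification of the induced arrangement. The affine hull $W$ of $F$ is a $k$-dimensional element of $\mathcal L(B_n)$, cut out by equations of the form $\beta_j = 0$ (for $j \in A_0$) and $\beta_j = \pm \beta_{j'}$ (within each block). Substituting the levels parametrization $\beta = \phi(\gamma)$ into the equations defining the $B_n$-hyperplanes, I would verify by inspection that the restriction of the arrangement to $W$ is, in $\gamma$-coordinates, exactly the reflection arrangement of type $B_k$ on $\R^k$; its $2^k k!$ Weyl chambers correspond bijectively to the $k$-faces of the $B_n$-arrangement contained in $W$. Moreover, since every element of $\mathcal L(B_k)$ on $W$ is simultaneously an element of $\mathcal L(B_n)$ when viewed in $\R^n$, the general-position hypothesis on $L$ automatically ensures that $L \cap W$ is a codimension-$d$ subspace of $W$ in general position with respect to the induced $B_k$-arrangement. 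Applying equation~\eqref{eq:absorption_B_n} in $W \cong \R^k$ then yields that exactly $2(B(k, d-1) + B(k, d-3) + \ldots)$ of the $2^k k!$ chambers of the induced arrangement have trivial intersection with $L \cap W$; that is, each such $W$ contributes exactly this many $k$-faces to the count on the left-hand side of the theorem.

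To complete the proof, I would rewrite the left-hand side as $\frac{1}{2^n n!} \sum_F \mathrm{mult}(F) \cdot \ind_{\{F \cap L = \{0\}\}}$, where $\mathrm{mult}(F) = m_0! \cdot 2^{m_0} \cdot \prod_{l=1}^k m_l!$ counts the $B_n$-chambers having $F$ as a face (here $m_l := i_l - i_{l-1}$ and $m_0 := n - i_k$ denote block sizes). Since $\mathrm{mult}(F)$ depends only on the ambient subspace $W$ and is constant across the $2^k k!$ faces inside any given $W$, grouping by $W$ gives $\sum_W \mathrm{mult}(F) = \frac{n!}{k!} \binom{n}{k} 2^{n-k}$; this follows from the identity $\sum_F \mathrm{mult}(F) = 2^n n! \binom{n}{k}$ (each chamber has $\binom{n}{k}$ faces of dimension $k$) combined with the chambers-per-$W$ count of $2^k k!$. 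Putting the pieces together yields the claimed formula. The main obstacle I anticipate is the rigorous verification that $\mathcal L(B_k)$ on $W$ coincides with the restriction of $\mathcal L(B_n)$ and that the general-position property transfers correctly across this identification; this is conceptually clean once the embedding $W \cong \R^k$ is set up carefully, but it requires some care with signs and with the permutations of blocks of equal size, which also affects the bookkeeping in the final multiplicity computation.
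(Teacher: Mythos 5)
Your proposal is correct and follows essentially the same route as the paper's proof: you identify the $k$-faces sharing a common linear span $W$ with the $2^k k!$ chambers of an induced reflection arrangement of type $B_k$ on $W$, apply the $k=n$ case to $L\cap W$ there, and finish with the same multiplicity bookkeeping (your $\mathrm{mult}(F)=m_0!\,2^{m_0}\prod_l m_l!$ and the grouping-by-$W$ count match the paper's computation exactly). One small correction: the fact you need on each $W$ is the \emph{deterministic} chamber count for a fixed subspace in general position, i.e.\ Theorem~\ref{theo:intersect_weyl_chambers_known}, rather than the probabilistic formula~\eqref{eq:absorption_B_n}, so the detour through kernel representations and convex hulls of partial sums can be skipped.
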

Here, we say that $Q$ and $L$ intersect {\it in a trivial way} if $Q \cap L = \{0\}$.

\begin{corollary} \label{cor: number of vertices}
Let $d=1$ (so $L$ is a hyperplane) and let $H$ be either of the open half-spaces with the boundary $L$. Then the number of vertices $V_n$ of the random simplex $Q \cap [-1,1]^n$ lying in $H$ follows the discrete arcsine distribution~\eqref{eq:arcsine_occupation1}.
\end{corollary}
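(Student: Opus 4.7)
The plan is to realize the vertex count $V_n$ as the number of positive partial sums of an explicit one-dimensional random walk satisfying $(\pm\text{Ex})$ and $(\text{GP})$, and then to invoke Theorem~\ref{theo:arcsine_multidim} with $d=1$.

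First I would identify the nonzero vertices of $Q \cap [-1,1]^n$ when $Q = C_{\eps,\sigma}^B$. The orthogonal map $A : \beta \mapsto (\eps_i \beta_{\sigma(i)})_{i=1}^n$ carries $C_{\eps,\sigma}^B$ onto the monotone cone $\{\gamma_1 \geq \cdots \geq \gamma_n \geq 0\}$ and preserves the cube, so it sends $Q \cap [-1,1]^n$ onto $\{1 \geq \gamma_1 \geq \cdots \geq \gamma_n \geq 0\}$, whose $n+1$ vertices are $0$ and the prefix sums $e_1, e_1+e_2, \ldots, e_1+\cdots+e_n$. Pulling back, the nonzero vertices of $Q \cap [-1,1]^n$ are
\[
u_k := \sum_{i=1}^{k} \eps_i e_{\sigma(i)}, \qquad k = 1, \ldots, n.
\]
Writing $L = \{\beta \in \R^n : \langle c, \beta \rangle = 0\}$ for some $c \neq 0$ and, possibly replacing $c$ by $-c$, arranging that $H = \{\beta : \langle c, \beta \rangle > 0\}$, I then have $u_k \in H$ if and only if
\[
S_k := \sum_{i=1}^{k} \eps_i c_{\sigma(i)} > 0,
\]
so $V_n = \#\{k : S_k > 0\}$.

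Next I would view $(S_k)_{k=1}^n$ as a one-dimensional random walk with increments $\xi_i = \eps_i c_{\sigma(i)}$, where $(\eps, \sigma)$ is uniform on $\{-1,+1\}^n \times \Sym(n)$. Symmetric exchangeability of $(\xi_i)$ is immediate because $(\eps, \sigma) \mapsto ((\eps'_i \eps_{\sigma'(i)})_{i=1}^n,\, \sigma \circ \sigma')$ is a bijection of $\{-1,+1\}^n \times \Sym(n)$ that preserves the uniform law, which gives $(\xi_i) \eqdistr (\eps'_i \xi_{\sigma'(i)})$ for every choice of $(\eps', \sigma')$.

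The main technical step, which I expect to be the only delicate point, is to extract $(\text{GP})$ from general position of $L$ with respect to $\mathcal L(B_n)$. An element $K \in \mathcal L(B_n)$ is determined by a signed partition $\{B_0;\,(B_1,\tau_1),\ldots,(B_r,\tau_r)\}$ of $\{1,\ldots,n\}$ and has dimension $r$; moreover $L \supset K$ if and only if $\sum_{i \in B_j} \tau_j(i) c_i = 0$ for each $j$. The condition $\dim(L \cap K) = r - 1$ for $r \geq 1$ forbids all such containments, and specialising to signed partitions with a single non-zero block $(I, \tau)$ gives $\sum_{i \in I} \tau(i) c_i \neq 0$ for every nonempty $I \subset \{1,\ldots,n\}$ and every $\tau : I \to \{\pm 1\}$. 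This yields $\P[S_i = 0] = 0$ for every $i$, i.e.\ $(\text{GP})$. Theorem~\ref{theo:arcsine_multidim} with $d=1$ then gives $\E\binom{V_n}{k} = \frac{1}{2^{2k}}\binom{2k}{k}\binom{n}{k}$ for $k = 1,\ldots,n$, which are precisely the factorial moments of the discrete arcsine law~\eqref{eq:arcsine_occupation1}; since factorial moments up to order $n$ determine a distribution on $\{0,1,\ldots,n\}$ uniquely, $V_n$ follows that law.
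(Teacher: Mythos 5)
Your proof is correct, but it takes a genuinely different route from the paper's. The paper stays entirely on the geometric side: it observes that the number of $k$-faces of $Q$ meeting $L$ trivially equals $\binom{V_n}{k}+\binom{n-V_n}{k}$ (a $k$-face of $C^B_{\eps,\sigma}$ is positively spanned by $k$ of the $n$ nonzero vertices, and it misses the hyperplane $L$ iff those vertices all lie on one side), then reads off $\E\binom{V_n}{k}$ directly from Theorem~\ref{theo:intersect_weyl_chambers} together with $V_n\eqdistr n-V_n$. You instead transfer the problem back to the probabilistic side by building an explicit one-dimensional walk $S_k=\sum_{i\le k}\eps_i c_{\sigma(i)}$ whose positive-term count is $V_n$, verify $(\pm\text{Ex})$ via the measure-preserving bijection of $\{-1,+1\}^n\times\Sym(n)$ and $(\text{GP})$ via the rank-one elements of $\mathcal L(B_n)$, and then invoke Theorem~\ref{theo:arcsine_multidim} with $d=1$. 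All of these steps check out; in particular your extraction of $\sum_{i\in I}\tau(i)c_i\neq 0$ from the general-position condition~\eqref{eq:def_gen_pos} applied to the line spanned by $\sum_{i\in I}\tau(i)e_i$ is exactly what is needed, and your identification of the nonzero vertices as $u_k=\sum_{i\le k}\eps_i e_{\sigma(i)}$ is right. What the paper's argument buys is brevity (two lines given Theorem~\ref{theo:intersect_weyl_chambers}); what yours buys is that it makes the walk/chamber duality of Lemma~\ref{lem:geometric_interpretation} explicit in the reverse direction, realizing the "random chamber, fixed hyperplane" model concretely as a "fixed increments up to random signed permutation" walk. One structural caveat: since the paper proves Theorem~\ref{theo:arcsine_multidim} \emph{from} Theorem~\ref{theo:intersect_weyl_chambers}, your route is logically a detour through the probabilistic theorem and back, though it is equally valid because the paper also supplies an independent proof of Theorem~\ref{theo:arcsine_multidim} in Section~\ref{sec:proofs_B_n}.
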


\begin{proof}[Proof of Corollary~\ref{cor: number of vertices}]
The number of the $k$-dimensional faces of $Q$ intersected by $L$ in a trivial way equals $\binom{V_n}{k} + \binom{n-V_n}{k}$. By taking the expectations and using the distributional identity $V_n \eqdistr n - V_n$ and the result of Theorem~\ref{theo:intersect_weyl_chambers}, we conclude that the random variables $V_n$ and $N_n$ have the same factorial moments given in~\eqref{eq:N_n=0}. As we argued in the Introduction, these are factorial moments of the discrete arcsine distribution, which is the unique distribution on $\{0, \ldots, n\}$ with the given factorial moments. Therefore $V_n$ and $N_n$ have the same distribution.
\end{proof}


\subsection{Proof of Theorem~\ref{theo:arcsine_multidim} given Theorem~\ref{theo:intersect_weyl_chambers}}
We shall need a short notation for one of the closed Weyl chambers:
$$
C^B := \{(\beta_1,\ldots,\beta_n) \in \R^n\colon \beta_1 \geq \beta_{2} \geq \ldots \geq \beta_n \geq 0\}.
$$
The next lemma records the relation between random walks and linear subspaces intersecting Weyl chambers. The case $k=n$ of this lemma appeared in~\cite{KVZ15}.
\begin{lemma}\label{lem:geometric_interpretation}
Let $x_1,\ldots,x_n\in\R^d$ be arbitrary vectors and denote by $s_i = x_1+\ldots+x_i$, $1\leq i\leq n$, their partial sums. Let $A: \R^n \to \R^d$ be the linear operator defined on the standard basis $e_1,\ldots,e_n$ of $\R^n$ by $A e_1 =x_1, \ldots, A e_n = x_n$.
Then, the number of collections of indices $1\leq i_1<\ldots < i_k\leq n$ such that $0\in \conv(s_{i_1},\ldots,s_{i_k})$ is equal to the number of $k$-dimensional faces $F$ in the convex cone $C^B$ intersected non-trivially by the linear subspace $\Ker A$.
\end{lemma}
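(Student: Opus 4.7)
The plan is to prove the identity term-by-term. The $k$-dimensional faces of $C^B$ are canonically indexed by ordered tuples $1\le i_1<\ldots<i_k\le n$ via formula~\eqref{eq:C_eps_sigma_face}, specialized to $\eps=(1,\ldots,1)$ and $\sigma=\mathrm{id}$. Let $F=F(i_1,\ldots,i_k)$ denote the corresponding face. It therefore suffices to establish, for each fixed tuple, the equivalence
$$
F\cap \Ker A \neq \{0\} \quad\Longleftrightarrow\quad 0\in\conv(s_{i_1},\ldots,s_{i_k}),
$$
from which the lemma follows by summation over all $\binom{n}{k}$ tuples.

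To set up the equivalence, I would parametrize an arbitrary $\beta\in F$ by setting $\beta_m:=\gamma_j$ for $i_{j-1}+1\le m\le i_j$ (with the convention $i_0:=0$) and $\beta_m:=0$ for $m>i_k$. The chain of equalities and inequalities in~\eqref{eq:C_eps_sigma_face} then reduces cleanly to $\gamma_1\ge\gamma_2\ge\ldots\ge\gamma_k\ge 0$; note that the final constraint $\gamma_k\ge 0$ arises from the explicit relation $\gamma_k\ge\beta_{i_k+1}=0$ when $i_k<n$, and from the inherited constraint $\beta_n\ge 0$ of $C^B$ itself when $i_k=n$, so the parametrization is uniform in the two boundary cases. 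A direct computation using $Ae_m=x_m$ gives
$$
A\beta=\sum_{m=1}^{n}\beta_m x_m=\sum_{j=1}^{k}\gamma_j\bigl(s_{i_j}-s_{i_{j-1}}\bigr),\qquad s_0:=0.
$$
An Abel summation — equivalently, the substitution $\alpha_j:=\gamma_j-\gamma_{j+1}$ with $\gamma_{k+1}:=0$ — converts the right-hand side into $\sum_{j=1}^{k}\alpha_j s_{i_j}$. The map $(\gamma_j)\mapsto(\alpha_j)$ is a bijection between the cones $\{\gamma_1\ge\ldots\ge\gamma_k\ge 0\}$ and $\R^k_{\ge 0}$, with inverse $\gamma_j=\alpha_j+\ldots+\alpha_k$, and it sends $\beta=0$ to $\alpha=0$.

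Consequently $F\cap\Ker A\neq\{0\}$ holds precisely when there exist non-negative reals $\alpha_1,\ldots,\alpha_k$, not all zero, with $\sum_j\alpha_j s_{i_j}=0$; normalizing by the positive quantity $\alpha_1+\ldots+\alpha_k$ turns this into the statement $0\in\conv(s_{i_1},\ldots,s_{i_k})$, and conversely any convex combination witnessing the latter supplies an admissible $\alpha$. This gives the desired equivalence and hence the lemma. No step appears genuinely delicate — the argument is a bookkeeping exercise plus Abel summation — so the only real thing to watch is the uniform treatment of the case $i_k=n$ described above, ensuring that the parametrization by $(\gamma_1,\ldots,\gamma_k)$ really does cover the whole face.
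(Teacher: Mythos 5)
Your proof is correct and follows essentially the same route as the paper's: both establish a face-by-tuple correspondence via the substitution $\gamma_j=\alpha_j+\ldots+\alpha_k$ (your Abel summation is exactly the paper's rearrangement of $\sum_j\alpha_j s_{i_j}$ into $\sum_m\beta_m x_m$, read in the opposite direction). Your explicit handling of the boundary case $i_k=n$ is a welcome clarification of a point the paper treats only in passing.
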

\begin{proof}
For a given collection of indices $1\leq i_1<\ldots < i_k\leq n$ we have $0\in \conv(s_{i_1},\ldots,s_{i_k})$ if and only if there exist $\alpha_1,\ldots,\alpha_k\geq 0$ (not all of them being $0$) such that $\alpha_1 s_{i_1} + \ldots+ \alpha_k s_{i_k} = 0$, or, equivalently,
$$
\alpha_1 (x_1+\ldots+x_{i_1}) + \alpha_2 (x_1+\ldots+x_{i_2}) + \ldots + \alpha_k (x_1+\ldots+x_{i_k}) = 0.
$$
Rearranging the terms, we rewrite this condition as
\begin{multline}\label{eq:lin_comb}
x_1(\alpha_1 + \ldots +\alpha_k) + \ldots + x_{i_1}(\alpha_1 + \ldots +\alpha_k)
\\
+x_{i_1+1} (\alpha_2 + \ldots +\alpha_k) + \ldots + x_{i_2} (\alpha_2+\ldots+\alpha_k)
\\
+\ldots
+x_{i_{k-1}+1} \alpha_k + \ldots + x_{i_k} \alpha_k = 0.
\end{multline}
Introducing the new variables $\beta_1,\ldots,\beta_n$ as the coefficients of $x_1,\ldots,x_n$, that is
\begin{align*}
&\beta_1 =\ldots = \beta_{i_1} := \alpha_1+\ldots+\alpha_k,\\
&\beta_{i_1+1} =\ldots = \beta_{i_2} := \alpha_2+\ldots+\alpha_k,\\
&\ldots,\\
&\beta_{i_{k-1}+1} =\ldots = \beta_{i_k} := \alpha_k,\\
&\beta_{i_k + 1} = \ldots = \beta_n :=0,
\end{align*}
we can rewrite~\eqref{eq:lin_comb} as
$
\beta_1 x_1 + \ldots +\beta _n x_n = 0
$
or, equivalently, $(\beta_1,\ldots,\beta_n)\in \Ker A$.
Our conditions on the $\alpha_i$'s translate into the following equivalent condition on the $\beta_i$'s:
\begin{equation}\label{eq:face_of_Weyl_chamber}
\beta_1 = \ldots = \beta_{i_1} \geq \beta_{i_1+1} =\ldots = \beta_{i_2} \geq \ldots \geq \beta_{i_{k-1}+1} = \ldots =\beta_{i_k} \geq \beta_{i_k+1} =\ldots = \beta_n =0,
\end{equation}
where at least one inequality should be strict, that is $(\beta_1,\ldots,\beta_n)\neq 0$. If $i_k=n$, then there are no $\beta_i$'s required to vanish. Summarizing, we have $0\in \conv(s_{i_1},\ldots,s_{i_k})$ if and only if $F\cap \Ker A \neq \{0\}$, where $F\subset \R^n$ is the closed convex cone defined by~\eqref{eq:face_of_Weyl_chamber}. Since any such $F$ is a $k$-dimensional face of the convex cone $C^B$ and, conversely, any $k$-face has this form, we obtain the required statement.
\end{proof}

\begin{proof}[Proof of Theorem~\ref{theo:arcsine_multidim} given Theorem~\ref{theo:intersect_weyl_chambers}]
Let $A: \R^n \to \R^d$ be the random linear operator defined on the standard basis $e_1,\ldots,e_n$ of $\R^n$ by $A e_1 =\xi_1, \ldots, A e_n = \xi_n$.  By Lemma~\ref{lem:geometric_interpretation},
\begin{equation} \label{eq:M_n,k = distr 1}
M_{n,k}^{(d)} =
\frac{1}{2} \sum_{1\leq i_1 < \ldots < i_k\leq n} \ind_{\{0\notin \conv(S_{i_1},\ldots, S_{i_k})\}}
=
\frac 12 \sum_{F\in \mathcal F_k(C^B)} \ind_{\{F \cap \Ker A = \{0\}\}}
\quad   \text{a.s.},
\end{equation}
where we used that both sums have the same number of terms $\#\mathcal F_{k}(C^B) = \binom n k$. Let us show that together with symmetric exchangeability assumption $(\pm\text{Ex})$, this implies that for every closed Weyl chamber $C_{\eps,\sigma}^B$,
\begin{equation} \label{eq:M_n,k = distr 2}
M_{n,k}^{(d)} \eqdistr \frac 12 \sum_{F\in \mathcal F_k(C_{\eps,\sigma}^B)} \ind_{\{F\cap \Ker A = \{0\}\}}.
\end{equation}

First note that $C_{\eps,\sigma}^B = g (C^B)$ for $g:=g_{\eps((\bar \sigma)^{-1}), (\bar \sigma)^{-1}}$, where the permutation $\bar \sigma$ is defined by $\bar \sigma =(\sigma(n), \dots, \sigma(1))$. This holds by the fact that $g e_k= \eps_{\bar \sigma(k)} e_{\bar \sigma(k)}$ (where $1 \le k \le n$), which ensures that $\bar \sigma$ arranges the absolute values of coordinates of points in $g (C^B)$ in an increasing order, so $\sigma$ arranges them in a decreasing order as needed for $C_{\eps,\sigma}^B$. Further, for any $k$-face $F\in \mathcal F_k(C^B)$,
$$\bigl \{g(F) \cap \Ker A = \{0\} \bigr \} = \bigl \{F \cap g^{-1}(\Ker A) = \{0\} \bigr \} = \bigl \{F \cap \Ker (A g )= \{0\} \bigr \}.$$ Since the random linear operator $Ag$ satisfies $(Ag) e_1 =\eps_{\sigma(n)} e_{\sigma(n)} ,\dots, (Ag) e_n = \eps_{\sigma(1)} e_{\sigma(1)}$, from $(\pm\text{Ex})$ it follows that $\Ker (A g ) \eqdistr \Ker A$. Hence \eqref{eq:M_n,k = distr 2} follows from \eqref{eq:M_n,k = distr 1} as required.

Taking the expectation in \eqref{eq:M_n,k = distr 2} and then the mean over all $2^n n!$ pairs $(\eps,\sigma)$, we obtain
\begin{align*}
2\E M_{n,k}^{(d)}
&=
\E \left[\frac{1}{2^n n!} \sum_{\eps\in \{-1,+1\}^n} \sum_{\sigma\in \Sym(n)}
\sum_{F\in \mathcal F_k(C_{\eps,\sigma}^B)} \ind_{\{F\cap \Ker A = \{0\}\}}\right] \\
&=
2\binom n k \frac {B(k, d-1) + B(k, d-3) +\ldots} {2^k k!},
\end{align*}
where the second equality is by Theorem~\ref{theo:intersect_weyl_chambers} applied to $L:=\Ker A$. The fact that with probability one, $\Ker A$ has codimension $d$ and is in general position with respect to the reflection arrangement of type $B_n$ is proved in~\cite[Lemma 6.3]{KVZ15}.
\end{proof}

We finish this section with the following observation discussed in the Introduction and related to Corollary~\ref{cor: number of vertices}. In the case $d=1$, the random subspace $\Ker A$ is a hyperplane a.s. It is easy to see that $N_n$, the number  of positive terms of the random walk $(S_i)_{i=1}^n$, is a.s. equal to the number of vertices of the simplex
\[
\{(\beta_1,\ldots,\beta_n) \in \R^n\colon 1 \geq  \beta_1 \geq  \beta_2 \geq  \ldots \geq  \beta_n \geq 0\}
\]
lying is the open half-space $\{ \beta \in \R^n: A \cdot \beta >0 \}$ with the boundary $\Ker A$.

\subsection{Proof of Theorem~\ref{theo:intersect_weyl_chambers}}\label{subsec:proof_thm_arcsine_geometric}
In the special case $n=k$ Theorem~\ref{theo:intersect_weyl_chambers} gives a formula for the number of Weyl chambers of type $B_n$ intersected non-trivially  by a linear subspace of codimension $d$ in general position. It was established in~\cite{KVZ15} using the theory of hyperplane arrangements. Let us state this result.

\begin{theorem}[\cite{KVZ15}]\label{theo:intersect_weyl_chambers_known}
Let $L\subset \R^n$ be a deterministic linear subspace of codimension $d$ in general position with respect to the reflection arrangement of type $B_n$. Let $Q$ be sampled randomly and uniformly among the $2^n n!$ closed Weyl chambers $C_{\eps,\sigma}^B$ of type $B_n$. Then,
\begin{align*}
\P [L\cap Q = \{0\}]
&\eqdef
\frac 1 {2^n n!}\sum_{\eps\in \{-1,+1\}^n} \sum_{\sigma\in \Sym(n)} \ind_{\{L\cap C_{\eps,\sigma}^B = \{0\}\}}\\
&=
\frac {2(B(n, d-1) + B(n, d-3) +\ldots)} {2^n n!}.
\end{align*}
\end{theorem}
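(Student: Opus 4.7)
The plan is to reduce the theorem to a computation of the number of regions of the $B_n$-reflection arrangement sliced by $L$, then to extract the result via the characteristic polynomial and Zaslavsky's formula. \emph{Step 1.} For $L$ generic of codimension $d$ with $1\le d\le n-1$, general position forces each Weyl chamber $C$ to satisfy either $C\cap L=\{0\}$ or $\dim(C\cap L)=n-d$: no intermediate dimension is possible, because if $L$ met a proper face of $C$ in positive dimension, genericity would force $L$ to cross through it and enter $C^\circ$. Whenever $\dim(C\cap L)=n-d$, the open slice $L\cap C^\circ$ is a region of the restricted arrangement $\mathcal A(B_n)|_L := \{H\cap L:H\in\mathcal A(B_n)\}$ on $L$, and the assignment $C\mapsto L\cap C^\circ$ is a bijection onto the set of regions of $\mathcal A(B_n)|_L$. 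Hence
$$
\sum_{\eps,\sigma}\ind_{\{L\cap C_{\eps,\sigma}^B=\{0\}\}} \;=\; 2^n n! - r(\mathcal A(B_n)|_L),
$$
with the corner case $d=n$ ($L=\{0\}$) handled separately and giving the full count $2^n n!$.

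\emph{Steps 2--3.} To compute $r(\mathcal A(B_n)|_L)$, I would use that the intersection lattice of $\mathcal A(B_n)|_L$ above its minimum element $\{0\}$ is isomorphic, via $X\mapsto X\cap L$, to the sub-poset of flats of $\mathcal L(B_n)$ of dimension strictly greater than $d$, Möbius values transferring accordingly. The known factorization
$$
\chi_{\mathcal A(B_n)}(t) \;=\; \prod_{i=1}^n\bigl(t-(2i-1)\bigr) \;=\; \sum_{j=0}^n (-1)^{n-j} B(n,j)\,t^j
$$
pins down the Möbius values of $\mathcal L(B_n)$ on each dimension layer, and the Möbius-sum identity fixes the bottom-element contribution, giving a closed-form expression for $\chi_{\mathcal A(B_n)|_L}(t)$ in terms of the $B(n,\cdot)$. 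Applying Zaslavsky, $r(\mathcal A(B_n)|_L)=(-1)^{n-d}\chi_{\mathcal A(B_n)|_L}(-1)$, and simplifying via the parity identity
$$
\sum_{j\text{ even}} B(n,j) \;=\; \sum_{j\text{ odd}} B(n,j) \;=\; 2^{n-1}n!
$$
(obtained from~\eqref{eq:def_B_k_j} at $t=\pm 1$) yields $2^n n! - r(\mathcal A(B_n)|_L) = 2(B(n,d-1)+B(n,d-3)+\cdots)$, which is the claim after dividing by $2^n n!$.

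\emph{Main obstacle.} The technical heart is the Möbius-function bookkeeping on the signed-partition lattice $\mathcal L(B_n)$ needed to produce a closed form for $\chi_{\mathcal A(B_n)|_L}(t)$. A cleaner alternative is induction on $n$ via the recursion $B(n,j)=B(n-1,j-1)+(2n-1) B(n-1,j)$, which mirrors the deletion--restriction recursion on $\mathcal A(B_n)$ obtained by singling out a reflecting hyperplane, and expresses the chamber count at codimension $d$ in terms of two $B_{n-1}$-counts at codimensions $d-1$ and $d$. The inductive step then reduces to verifying the algebraic identity
$$
B(n,d-1)+B(n,d-3)+\cdots \;=\; \bigl(B(n-1,d-2)+B(n-1,d-4)+\cdots\bigr) + (2n-1)\bigl(B(n-1,d-1)+B(n-1,d-3)+\cdots\bigr),
$$
which is immediate from the $B$-recursion.
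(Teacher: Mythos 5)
The paper does not actually prove Theorem~\ref{theo:intersect_weyl_chambers_known}; it imports it wholesale from~\cite{KVZ15}, noting only that it was established there ``using the theory of hyperplane arrangements.'' Your outline is, in substance, a reconstruction of that argument: pass to the restricted arrangement on $L$, identify its regions with the chambers meeting $L$ non-trivially, compute the characteristic polynomial of the generic section from that of $\mathcal A(B_n)$, and finish with Zaslavsky's formula. I checked the bookkeeping at the end: with $\chi_{\mathcal A(B_n)}(t)=\sum_j(-1)^{n-j}B(n,j)t^j$ one gets $r(\mathcal A(B_n)|_L)=2\bigl(B(n,d+1)+B(n,d+3)+\cdots\bigr)$, and the parity identity $\sum_{j\,\mathrm{even}}B(n,j)=\sum_{j\,\mathrm{odd}}B(n,j)=2^{n-1}n!$ turns $2^nn!-r(\mathcal A(B_n)|_L)$ into $2\bigl(B(n,d-1)+B(n,d-3)+\cdots\bigr)$, as claimed. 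So the architecture is sound and consistent with the route taken in the cited source.

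The one step you dispatch too quickly is the dichotomy in Step 1. The bijection between regions of $\mathcal A(B_n)|_L$ and chambers $C$ with $L\cap C^\circ\neq\emptyset$ is immediate (each $L\cap C^\circ$ is convex, hence connected, and these sets partition $L$ minus the hyperplanes). What genuinely requires general position is the implication $C\cap L\neq\{0\}\Rightarrow L\cap C^\circ\neq\emptyset$, and ``genericity would force $L$ to cross through it'' is not yet an argument: a priori $L$ could meet $C$ in a positive-dimensional cone lying entirely in $\partial C$, since general position does allow $\dim(L\cap K)=\dim K-d>0$ for flats $K$ of large dimension. A correct argument: pick $x\neq 0$ in the relative interior of the cone $C\cap L$, let $F$ be the face of $C$ with $x\in\mathrm{relint}(F)$ and $K=\lspan F\in\mathcal L(B_n)$, $\dim K=m$. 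Then $x\in L\cap K$ forces $m>d$, so by~\eqref{eq:def_gen_pos} $\dim(L\cap K)=m-d$ and hence $L+K=\R^n$; therefore $L$ surjects (openly) onto $\R^n/K$, where near $x$ the localized arrangement cuts out the open cone corresponding to $C^\circ$, and an open neighbourhood of $0$ in $\R^n/K$ must meet that cone. This gives $L\cap C^\circ\neq\emptyset$. Similarly, in Steps 2--3 the transfer of M\"obius values to $\mathcal A(B_n)|_L$ rests on the generic-section theorem (the map $K\mapsto K\cap L$ is a poset isomorphism from the flats of dimension $>d$ onto the positive-dimensional flats of the restriction), and the constant term is pinned down by $\chi_{\mathcal A(B_n)|_L}(1)=0$; you correctly flag this as the technical heart, and it is standard, but it is the part that would need to be written out if you were not simply citing~\cite{KVZ15}.
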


The following combinatorial proof deduces the assertion of Theorem~\ref{theo:intersect_weyl_chambers} for general $k$ from Theorem~\ref{theo:intersect_weyl_chambers_known} without using any additional tools.

\vspace*{2mm}
\noindent 
\textbf{Enumeration of the $k$-faces of Weyl chambers of type $B_n$.}
Before proceeding to the proof, we introduce some notation and give few combinatorial examples.
Recall that the $k$-dimensional faces of the Weyl chamber $C_{\eps,\sigma}^B$ are denoted by $C_{\eps,\sigma}^B(i_1,\ldots,i_k)$; see~\eqref{eq:C_eps_sigma_face}.  It is important to stress that the $k$-face $C_{\eps,\sigma}^B(i_1,\ldots,i_k)$ may be a $k$-face of another Weyl chamber $C_{\eps',\sigma'}^B$ with some $(\eps',\sigma') \neq (\eps,\sigma)$.
\begin{example}\label{ex:k_face_chamber}
Consider the case $n=8$, $k=3$ and the convex cone given by the following set of conditions:
\begin{equation}\label{eq:example_face}
\underbrace{-\beta_2 =  \beta_4}_{\text{group } 1} \geq \underbrace{\beta_1 =  -\beta_6}_{\text{group } 2} \geq \underbrace{\beta_3}_{\text{group } 3} \geq \underbrace{\beta_5 = \beta_7 = \beta_8}_{\text{group } 4} = 0.
\end{equation}
This cone is a $3$-dimensional face of the Weyl chamber
\begin{equation}\label{eq:example_chamber}
-\beta_2 \geq  \beta_4 \geq \beta_1 \geq  -\beta_6 \geq \beta_3 \geq \beta_5 \geq \beta_7\geq \beta_8 \geq  0.
\end{equation}
However, it is also a $3$-face of
$$
\beta_4 \geq  -\beta_2 \geq -\beta_6 \geq  \beta_1 \geq \beta_3 \geq -\beta_8 \geq -\beta_5\geq \beta_7 \geq  0
$$
and, more generally,  any of the chambers obtained from~\eqref{eq:example_chamber} by permuting the $\beta$'s inside the groups $(-\beta_2,\beta_4)$, $(\beta_1,-\beta_6)$, $\beta_3$, $(\beta_5,\beta_7,\beta_8)$, and by changing any number of signs in the last group. The total number of such chambers is $2!2!1!3! 2^3$.
\end{example}
We now introduce an enumeration of all $k$-faces of all Weyl chambers such that each face is counted exactly once.  Let $\mathcal P_{n,k}$ be the set of all pairs $(I,\eta)$, where $I=(I_1,\ldots,I_{k+1})$ is a partition of the set $\{1,\ldots,n\}$ into $k+1$ disjoint distinguishable subsets (``groups'') such that $I_1,\ldots,I_k$ are non-empty, whereas $I_{k+1}$ may be empty or not, and $\eta: I_1\cup\ldots\cup I_k \to \{-1,+1\}$. We shall write $\eta_i := \eta(i)$. Given a pair $(I,\eta) \in \mathcal P_{n,k}$ define a closed $k$-dimensional convex cone
\begin{align*}
Q_{I,\eta}
:=
\{(\beta_1,\ldots,\beta_n) &\in\R^n \colon
\\
&
\text{for all } 1\leq l_1\leq l_2 \leq k \text{ and } i_1\in I_{l_1}, i_2\in I_{l_2}  \text{ we have } \eta_{i_1} \beta_{i_1} \geq \eta_{i_2} \beta_{i_2}\geq 0;\\
&\text{for all } i\in I_{k+1} \text{ we have } \beta_i = 0
\}.
\end{align*}
As a consequence of these conditions, for all $1\leq l\leq k$ and $i_1,i_2\in I_l$ we have $\eta_{i_1} \beta_{i_1} = \eta_{i_2} \beta_{i_2}$.

\begin{example}
\label{ex:k_face_chamber_ctd}
If $n=8$, $k=3$  and the partition $I$ is given by $I_1= \{2,4\}$, $I_2 = \{1,6\}$, $I_3=\{3\}$, $I_4=\{5,7,8\}$, and the signs are $\eta_1 = \eta_3= \eta_4=+1$, $\eta_2=\eta_6=-1$, then the cone $Q_{I,\eta}$ is given by the set of inequalities~\eqref{eq:example_face}.
\end{example}

Given $(I,\eta) \in \mathcal P_{n,k}$ denote by $V_{I,\eta}$ the $k$-dimensional linear subspace of $\R^n$ spanned by $Q_{I,\eta}$, that is
\begin{align*}
V_{I,\eta}
:=
\{(\beta_1,\ldots,\beta_n) &\in\R^n \colon
\\
&
\text{for all } 1\leq l \leq k \text{ and } i_1, i_2\in I_{l}  \text{ we have } \eta_{i_1} \beta_{i_1} = \eta_{i_2} \beta_{i_2};\\
&\text{for all } i\in I_{k+1} \text{ we have } \beta_i = 0
\}.
\end{align*}
Using $\gamma_l := \eta_i \beta_i$, where $i\in I_l$ is arbitrary and $l=1,\ldots,k$, as coordinates on $V_{I,\eta}$ allows us to identify this linear space with $\R^k$. There is a natural decomposition of $V_{I,\eta}$ into $2^k k!$ Weyl chambers of type $B_k$ which have the form
$$
V_{I,\eta}(\zeta, \tau)
=
\{
(\beta_1,\ldots,\beta_n) \in V_{I,\eta}\colon
\zeta_1 \gamma_{\tau(1)} \geq \ldots\geq \zeta_k \gamma_{\tau(k)} \geq 0
\},
$$
where $\zeta\in \{-1,+1\}^k$,  $\tau \in \Sym(k)$. One of these chambers, corresponding to $\zeta_i=+1$, $\tau(i)=i$ for all $1\leq i \leq k$,  is $Q_{I,\eta}$.
\begin{example}\label{ex:k_face_chamber_ctd_ctd}
If the pair $(I,\eta)$ is the same as in Example~\ref{ex:k_face_chamber_ctd}, then the linear subspace $V_{I,\eta}$ is given by the following set of conditions
\begin{equation}\label{eq:example_V_I_eta}
\gamma_1:= \underbrace{-\beta_2 =  \beta_4}_{\text{group } 1}\in\R, \;\;
\gamma_2:= \underbrace{\beta_1 =  -\beta_6}_{\text{group } 2}\in\R, \;\;
\gamma_3:= \underbrace{\beta_3}_{\text{group } 3} \in\R,\;\;
\underbrace{\beta_5 = \beta_7 = \beta_8}_{\text{group } 4} = 0.
\end{equation}
It should be stressed that the linear subspaces $V_{I,\eta}$ are not pairwise different. For example, the set of conditions~\eqref{eq:example_V_I_eta} is clearly equivalent to the following one:
$$
\underbrace{-\beta_3}_{\text{former group } 3} \in\R,\;\;
\underbrace{\beta_1 =  -\beta_6}_{\text{former group } 2}\in\R, \;\;
\underbrace{\beta_2 =  -\beta_4}_{\text{former group } 1}\in\R, \;\;
\underbrace{\beta_5 = \beta_7 = \beta_8}_{\text{group } 4} = 0.
$$
More generally, we can interchange the first $k$ groups in an arbitrary way and multiply any number of groups by $\pm 1$, giving a total number of $2^k k!$ possibilities.
Clearly, the cone $Q_{I,\eta}$ given by~\eqref{eq:example_face} is one of the $2^k k!$ Weyl chambers which constitute $V_{I,\eta}$. However, as was explained above, there are other pairs $(I',\eta')$ such that $V_{I,\eta} = V_{I',\eta'}$ and  $Q_{I,\eta}$ coincides with one of the chambers $V_{I',\eta'}(\zeta,\tau)$.
\end{example}

\begin{proof}[\bf{Proof of Theorem~\ref{theo:intersect_weyl_chambers}}]
The cones $Q_{I,\eta}$, where $(I,\eta) \in \mathcal P_{n,k}$, are pairwise different and exhaust all $k$-dimensional faces of the Weyl chambers of type $B_n$.  The cone $Q_{I,\eta}$ belongs to $(\# I_1)!\ldots (\# I_{k+1})! 2^{\# I_{k+1}}$ Weyl chambers because, as was explained in Example~\ref{ex:k_face_chamber}, in the definition of the Weyl chamber containing $Q_{I,\eta}$ we can postulate any order of the elements $\eta_i\beta_i$, $i\in I_l$, for all $1\leq l\leq k$, and, additionally, we can postulate any order of the elements $\pm \beta_i$, $i\in I_{k+1}$, with arbitrary chosen signs. It follows that
\begin{equation}\label{eq:wspom1}
\sum_{\eps\in \{-1,+1\}^n} \sum_{\sigma\in \Sym(n)}
\sum_{F\in \mathcal F_k(C_{\eps,\sigma}^B)} \ind_{\{F\cap L = \{0\}\}}
=
\sum_{(I,\eta) \in \mathcal P_{n,k}} (\# I_1)!\ldots (\# I_{k+1})! 2^{\# I_{k+1}} \ind_{\{Q_{I,\eta}\cap L= \{0\}\}}.
\end{equation}

In the rest of the proof we compute the right-hand side of~\eqref{eq:wspom1}. We may suppose that $k\geq d$ because otherwise Theorem~\ref{theo:intersect_weyl_chambers} becomes trivial ($L$ intersects all $k$-faces trivially). Then, the codimension of $L\cap V_{I,\eta}$ in $V_{I,\eta}$ (which is an element of $\mathcal L(B_n)$) is $d$ because $L$ is in general position with respect to the reflection arrangement of type $B_n$ in $\R^n$; see~\eqref{eq:def_gen_pos}.
Also,  $L\cap V_{I,\eta}$  is in general position with respect to the reflection arrangement of type $B_k$ in $V_{I,\eta}$, as can be checked using the definition.
It follows from Theorem~\ref{theo:intersect_weyl_chambers_known} applied to the linear subspace $L\cap V_{I,\eta}\subset V_{I,\eta}$  that
$$
\sum_{\zeta\in \{-1,+1\}^k} \sum_{\tau \in \Sym(k)} \ind_{\{L \cap V_{I,\eta} (\zeta, \tau) = \{0\}\}}
= 2(B(k,d-1) + B(k,d-3) + \ldots).
$$
Multiplying this equality by $(\# I_1)!\ldots (\# I_{k+1})! 2^{\# I_{k+1}}$ and taking the sum over all $(I,\eta)\in \mathcal P_{n,k}$ we obtain
\begin{multline}\label{eq:wspom_LHS_RHS}
\sum_{(I,\eta)\in\mathcal P_{n,k}} \sum_{\zeta\in \{-1,+1\}^k} \sum_{\tau \in \Sym(k)}  (\# I_1)!\ldots (\# I_{k+1})! 2^{\# I_{k+1}} \ind_{\{L \cap V_{I,\eta} (\zeta, \tau) = \{0\}\}}
\\=
2(B(k,d-1) + B(k,d-3) + \ldots)
\sum_{(I,\eta)\in\mathcal P_{n,k}} (\# I_1)!\ldots (\# I_{k+1})! 2^{\# I_{k+1}}.
\end{multline}
Let us look at the triple sum on the left-hand side of~\eqref{eq:wspom_LHS_RHS}. Since any convex cone $Q_{I,\eta}$ can be represented in the form $V_{I',\eta'}(\zeta', \tau')$ in $2^k k!$ different ways,  we have
\begin{equation}\label{eq:wspom3}
\text{LHS\eqref{eq:wspom_LHS_RHS}} =
2^k k! \sum_{(I,\eta) \in \mathcal P_{n,k}} (\# I_1)!\ldots (\# I_{k+1})! 2^{\# I_{k+1}} \ind_{\{Q_{I,\eta}\cap L= \{0\}\}}.
\end{equation}
Let us now compute the sum on the right-hand side of~\eqref{eq:wspom_LHS_RHS}.
For $j_1,\ldots,j_k\in\N$ such that $j_1+\ldots+j_k\leq n$ denote by $\mathcal P_{n,k}(j_1,\ldots,j_k)$ the set of all pairs $(I,\eta) \in \mathcal P_{n,k}$ such that $\#I_1=j_1, \ldots, \#I_k = j_k$. Let $j_{k+1} = n-j_1-\ldots-j_k$. The number of elements in $\mathcal P_{n,k}(j_1,\ldots,j_k)$ is given by
$$
\# \mathcal P_{n,k} (j_1,\ldots,j_k) = \frac {n!2^{j_1+\ldots+j_k}}{j_1!\ldots j_{k+1}!} .
$$
It follows that
\begin{align}
\sum_{(I,\eta)\in\mathcal P_{n,k}} (\# I_1)!\ldots (\# I_{k+1})! 2^{\# I_{k+1}}
&=
\sum_{\substack{j_1,\ldots,j_k\in\N\\j_1+\ldots+j_k \leq n}}
\sum_{(I,\eta) \in\mathcal P_{n,k}(j_1,\ldots,j_k)} (\# I_1)!\ldots (\# I_{k+1})! 2^{\# I_{k+1}} \notag\\
&=
\sum_{\substack{j_1,\ldots,j_k\in\N\\j_1+\ldots+j_k \leq n}} \frac {n!2^{j_1+\ldots+j_k} }{j_1!\ldots j_{k+1}!} \cdot  j_1!\ldots j_{k+1}! 2^{n-(j_1+\ldots+j_k)}\notag\\
&= \sum_{\substack{j_1,\ldots,j_k\in\N\\j_1+\ldots+j_k \leq n}} n! 2^n\notag\\
&= n! 2^n \binom nk. \label{eq:wspom2}
\end{align}
Taking~\eqref{eq:wspom_LHS_RHS}, \eqref{eq:wspom3}, \eqref{eq:wspom2}  together yields
$$
\sum_{(I,\eta) \in \mathcal P_{n,k}} (\# I_1)!\ldots (\# I_{k+1})! 2^{\# I_{k+1}} \ind_{\{Q_{I,\eta}\cap L= \{0\}\}}
=
2^n n!\binom nk \frac{2(B(k,d-1) + B(k,d-3) + \ldots)}{2^k k!}.
$$
In view of~\eqref{eq:wspom1}, this completes the proof of Theorem~\ref{theo:intersect_weyl_chambers}.
\end{proof}

\subsection{The reflection group and Weyl chambers of type $A_{n-1}$}
We proceed to a geometric interpretation of Theorem~\ref{theo:arcsine_multidim_Br}. In the same way as random walks are related to the reflection group of type $B_n$, random bridges are related to the reflection group of type $A_{n-1}$. We start by recalling some definitions.

The \emph{reflection group $\mathcal G(A_{n-1})$} is the symmetric group $\Sym(n)$ acting on $\R^n$ by permuting the coordinates in an arbitrary way. The number of elements in $\mathcal G(A_{n-1})$ is $n!$. Note that this group leaves the hyperplane
$$
L_0 := \{(\beta_1,\ldots,\beta_n) \in \R^n\colon \beta_1 +  \ldots + \beta_n = 0\}
$$
invariant.
The \emph{fundamental Weyl chamber of type $A_{n-1}$} is the convex cone
$$
\mathcal C(A_{n-1}) := \{(\beta_1,\ldots,\beta_n)\in\R^n \colon \beta_{1} <  \ldots < \beta_{n}\}.
$$
Acting on the closure of this cone by the elements of the group $\mathcal G(A_{n-1})$, we obtain the closed Weyl chambers of type $A_{n-1}$  given by
\begin{equation}\label{eq:C_sigma_def}
C_{\sigma}^A := \{(\beta_1,\ldots,\beta_n) \in\R^n \colon \beta_{\sigma(1)} \geq  \ldots \geq \beta_{\sigma(n)}\},
\end{equation}
where $\sigma\in \Sym(n)$. Note that  $\cup_{\sigma\in \Sym(n)}C_\sigma^A = \R^n$, and the interiors of these convex cones are disjoint.
The \emph{reflection arrangement $\mathcal A(A_{n-1})$ of type $A_{n-1}$} consists of the hyperplanes
\begin{equation}\label{eq:reflection_arrangement_A}
\{\beta_i =\beta_j\}, \;\; 1\leq i < j \leq n.
\end{equation}
The lattice $\mathcal L(A_{n-1})$ generated by the reflection arrangement $\mathcal A(A_{n-1})$ and the notion of general position are defined in the same way as in the $B_n$-case.
The next result is an analogue of Theorem~\ref{theo:intersect_weyl_chambers} for Weyl chambers of type $A_{n-1}$.
\begin{theorem}\label{theo:intersect_weyl_chambers_Br}
Let $L\subset \R^n$ be a deterministic linear subspace of codimension $d$ in general position with respect to the reflection arrangement~\eqref{eq:reflection_arrangement_A} of type $A_{n-1}$. Let $Q$ be sampled randomly and uniformly among the $n!$ closed Weyl chambers $C_{\sigma}^A$ of type $A_{n-1}$. Then, the expected number of $k$-dimensional faces of $Q$ intersected  by $L$ in a trivial way is given by
\begin{align*}
\E \left[\sum_{F\in \mathcal F_k(Q)} \ind_{\{F\cap L = \{0\}\}}\right]
&\eqdef
\frac{1}{n!} \sum_{\sigma\in \Sym(n)}
\sum_{F\in \mathcal F_k(C_{\sigma}^A)} \ind_{\{F\cap L = \{0\}\}}\\
&=
\frac 2 {k!} \binom  {n-1} {k-1}  \left(\stirling{k}{d-1} + \stirling{k}{d-3} +\ldots\right),
\end{align*}
where the $\stirling{k}{j}$'s are the Stirling numbers of the first kind defined in Theorem~\ref{theo:arcsine_multidim_Br}.
\end{theorem}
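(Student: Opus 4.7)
My plan is to adapt the combinatorial proof of Theorem~\ref{theo:intersect_weyl_chambers}, systematically replacing the reflection arrangement of type $B_n$ by that of type $A_{n-1}$. The key input is the $A_{k-1}$ analogue of Theorem~\ref{theo:intersect_weyl_chambers_known} (established in~\cite{KVZ15}): for a linear subspace $L' \subset \R^k$ of codimension $d$ in general position with respect to the reflection arrangement $\mathcal{A}(A_{k-1})$,
\[
\sum_{\tau \in \Sym(k)} \ind_{\{L' \cap C_\tau^A = \{0\}\}} = 2\bigl(\stirling{k}{d-1} + \stirling{k}{d-3} + \cdots\bigr).
\]

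Given this, I would begin by enumerating the $k$-dimensional faces of all chambers $C_\sigma^A$ exactly once by means of ordered partitions $I = (I_1, \ldots, I_k)$ of $\{1, \ldots, n\}$ into $k$ nonempty distinguishable blocks. To such an $I$ I associate the closed cone
\[
Q_I := \{\beta \in \R^n : \beta_{i_1} = \beta_{i_2} \text{ when } i_1, i_2 \in I_l, \text{ and } \beta_{i_1} \geq \beta_{i_2} \text{ when } i_1 \in I_{l_1},\ i_2 \in I_{l_2},\ l_1 < l_2\}.
\]
Just as in Example~\ref{ex:k_face_chamber}, each $Q_I$ is a $k$-face of precisely $(\#I_1)!\cdots(\#I_k)!$ distinct chambers $C_\sigma^A$, obtained by reordering indices inside each block, so
\[
\sum_{\sigma \in \Sym(n)} \sum_{F \in \mathcal F_k(C_\sigma^A)} \ind_{\{F \cap L = \{0\}\}} = \sum_I (\#I_1)!\cdots(\#I_k)! \, \ind_{\{Q_I \cap L = \{0\}\}}.
\]

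Next, I would identify the $k$-dimensional span $V_I := \mathrm{span}(Q_I) \in \mathcal L(A_{n-1})$ with $\R^k$ via the common values $\gamma_l$ on each block $I_l$, so that the induced reflection arrangement of type $A_{k-1}$ on $V_I$ decomposes it into $k!$ chambers $V_I(\tau) := \{\gamma_{\tau(1)} \geq \cdots \geq \gamma_{\tau(k)}\}$, $\tau \in \Sym(k)$, with $Q_I$ itself corresponding to $\tau = \mathrm{id}$. The crucial combinatorial observation is that as $(I, \tau)$ ranges over its domain, each cone $Q_J$ is realized as some $V_I(\tau)$ exactly $k!$ times—one for each permutation of the blocks of $J$—and the weight $(\#I_1)!\cdots(\#I_k)!$ is invariant under such reorderings.

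Applying the cited $A_{k-1}$ identity inside each $V_I$ with $L' := L \cap V_I$ (after routinely checking that $L \cap V_I$ has codimension $d$ in $V_I$ and is in general position relative to the induced $A_{k-1}$-arrangement, both of which follow since any subspace in the sublattice generated by $\mathcal A(A_{k-1})$ on $V_I$ is already an element of $\mathcal L(A_{n-1})$), multiplying by $(\#I_1)!\cdots(\#I_k)!$, and summing over $I$ yields
\[
k! \sum_\sigma \sum_{F \in \mathcal F_k(C_\sigma^A)} \ind_{\{F \cap L = \{0\}\}} = 2\bigl(\stirling{k}{d-1} + \stirling{k}{d-3} + \cdots\bigr) \sum_I (\#I_1)!\cdots(\#I_k)!.
\]
The remaining sum evaluates to $n! \binom{n-1}{k-1}$: grouping partitions by their block-size composition $(j_1, \ldots, j_k)$ with $j_l \geq 1$ and $\sum_l j_l = n$, the contribution of each composition is $\frac{n!}{j_1!\cdots j_k!} \cdot j_1!\cdots j_k! = n!$, and there are $\binom{n-1}{k-1}$ such compositions. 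Dividing through by $k! n!$ produces the claimed formula. I expect the main obstacle to be the precise bookkeeping of multiplicities together with the verification that $L \cap V_I$ lies in general position with respect to $\mathcal A(A_{k-1})$ inside $V_I$; both are routine, parallel to the analogous checks in the $B_n$ case of Section~\ref{subsec:proof_thm_arcsine_geometric}.
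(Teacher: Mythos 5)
Your proposal is correct and follows essentially the same route as the paper's own proof: the same enumeration of $k$-faces by ordered partitions into $k$ nonempty blocks with multiplicity $(\#I_1)!\cdots(\#I_k)!$, the same reduction to the full-dimensional $A_{k-1}$ count~\eqref{eq:L_cap_Q_Br} inside the span of each face, the same $k!$ overcounting observation, and the same evaluation $\sum_I (\#I_1)!\cdots(\#I_k)! = n!\binom{n-1}{k-1}$. No gaps.
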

In the special case $k=n$, Theorem~\ref{theo:intersect_weyl_chambers_Br} reduces to the formula, which was proved in~\cite{KVZ15}:
\begin{align}
\P [L\cap Q = \{0\}]
\eqdef
\frac 1 {n!} \sum_{\sigma\in \Sym(n)} \ind_{\{L\cap C_{\sigma}^A = \{0\}\}}
=
\frac 2 {n!} \left(\stirling{n}{d-1} + \stirling{n}{d-3} +\ldots\right)
.
\label{eq:L_cap_Q_Br}
\end{align}

\subsection{Proof of Theorem~\ref{theo:arcsine_multidim_Br} given Theorem~\ref{theo:intersect_weyl_chambers_Br}}\label{017}
We shall need a short notation for one of the closed Weyl chambers of type $A_{n-1}$:
$$
C^A := \{(\beta_1,\ldots,\beta_n) \in \R^n\colon \beta_1  \geq \ldots \geq \beta_n\}.
$$
The next lemma is an analogue of Lemma~\ref{lem:geometric_interpretation}. The case $k=n$ of this lemma appeared in~\cite{KVZ15}.

\begin{lemma}\label{lem:geometric_interpretation_Br}
Let $x_1,\ldots,x_n\in\R^d$, where $n\geq 2$, be arbitrary vectors such that $x_1+\ldots+x_n = 0$. Denote by $s_i = x_1+\ldots+x_i$, $1\leq i\leq n$, their partial sums. Let $A: \R^n \to \R^d$ be a linear operator defined on the standard basis $e_1,\ldots,e_n$ of $\R^n$ by $A e_1 =x_1, \ldots, A e_n = x_n$.
Then the number of collections $1\leq i_1<\ldots < i_k\leq n-1$ such that $0\in \conv(s_{i_1},\ldots,s_{i_k})$ is equal to the number of $(k+1)$-dimensional faces $F$ of the convex cone $C^A$ intersected non-trivially by the linear subspace $L_0\cap \Ker A$.
\end{lemma}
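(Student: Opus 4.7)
The proof parallels that of Lemma~\ref{lem:geometric_interpretation}, with two new ingredients needed to pass to the setting of $C^A$ and $L_0$: the bridge relation $x_1+\ldots+x_n=0$, which forces $\mathbf{1}:=(1,\ldots,1)\in\Ker A$ because $A\mathbf{1}=x_1+\ldots+x_n=0$, and a shift by a multiple of $\mathbf{1}$ that trades the ``last group equals zero'' condition (characteristic of $k$-faces of $C^B$) for membership in the hyperplane $L_0$.

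For the forward direction, suppose $0\in\conv(s_{i_1},\ldots,s_{i_k})$ with $1\le i_1<\ldots<i_k\le n-1$. The same algebraic rearrangement as in the proof of Lemma~\ref{lem:geometric_interpretation} yields non-negative coefficients $\alpha_1,\ldots,\alpha_k$ (not all zero) and an element $\beta\in\Ker A$ satisfying
\[
\beta_1=\ldots=\beta_{i_1}\ge\beta_{i_1+1}=\ldots=\beta_{i_2}\ge\ldots\ge\beta_{i_{k-1}+1}=\ldots=\beta_{i_k}\ge\beta_{i_k+1}=\ldots=\beta_n=0.
\]
Set $\tilde\beta:=\beta-\bar\beta\,\mathbf{1}$ with $\bar\beta:=\tfrac{1}{n}\sum_i\beta_i$. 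Since $\mathbf{1}\in\Ker A$ and $\sum_i\tilde\beta_i=0$, we have $\tilde\beta\in L_0\cap\Ker A$; the shift preserves the chain of weak inequalities but releases the last group from being pinned to zero, so $\tilde\beta$ lies in the $(k+1)$-dimensional face
\[
F(i_1,\ldots,i_k):=\{\gamma\in\R^n:\gamma_1=\ldots=\gamma_{i_1}\ge\gamma_{i_1+1}=\ldots=\gamma_{i_2}\ge\ldots\ge\gamma_{i_k+1}=\ldots=\gamma_n\}
\]
of $C^A$. Moreover $\tilde\beta\neq 0$, for otherwise $\beta$ would be a multiple of $\mathbf{1}$ and the condition $\beta_n=0$ would force $\beta=0$, contradicting the fact that not all $\alpha_j$ vanish.

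For the converse, given a nonzero $\tilde\beta\in L_0\cap\Ker A\cap F(i_1,\ldots,i_k)$, set $c:=-\tilde\beta_n$ (the sign-reversed common value on the last group) and $\beta:=\tilde\beta+c\,\mathbf{1}$. Then $\beta\in\Ker A$, $\beta$ satisfies the chain of Lemma~\ref{lem:geometric_interpretation} with vanishing last group, and $\beta\neq 0$ (otherwise $\tilde\beta=-c\,\mathbf{1}\in L_0$ would force $c=0$ and $\tilde\beta=0$). Reading off the consecutive differences of group values as non-negative $\alpha_1,\ldots,\alpha_k$ (with at least one strictly positive) yields $\sum_j\alpha_j s_{i_j}=0$, i.e., $0\in\conv(s_{i_1},\ldots,s_{i_k})$. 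A final combinatorial count confirms that the faces $F(i_1,\ldots,i_k)$ with $1\le i_1<\ldots<i_k\le n-1$ exhaust the $(k+1)$-dimensional faces of $C^A$, since both sides are in bijection with the $k$-element subsets of $\{1,\ldots,n-1\}$ (partitions of $\{1,\ldots,n\}$ into $k+1$ consecutive blocks).

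The main obstacle is the bookkeeping around the one-dimensional lineality $\R\mathbf{1}$ of $C^A$: the passage from an element $\beta$ of $\Ker A$ lying in a $k$-face of $C^B$ to an element $\tilde\beta$ of $L_0\cap\Ker A$ lying in a $(k+1)$-face of $C^A$ is essentially the quotient by this lineality, and its correct implementation relies crucially on $\mathbf{1}\in\Ker A$, which is precisely the bridge condition $x_1+\ldots+x_n=0$. Once this shift is in place, every remaining step transcribes directly from the proof of Lemma~\ref{lem:geometric_interpretation}.
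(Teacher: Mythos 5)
Your proof is correct and follows essentially the same route as the paper's: the paper introduces a free shift parameter $b$ (available because $x_1+\ldots+x_n=0$ puts $(1,\ldots,1)$ in $\Ker A$) and chooses $b=\frac1n(i_1\alpha_1+\ldots+i_k\alpha_k)$ to land in $L_0$, which is exactly your centering $\tilde\beta=\beta-\bar\beta\,\mathbf{1}$. Your explicit verification of the non-vanishing of $\tilde\beta$ and $\beta$ in both directions is a slightly more careful rendering of a point the paper passes over quickly, but the argument is the same.
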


\begin{proof}
For a given collection of indices $1\leq i_1<\ldots < i_k\leq n-1$ we have $0\in \conv(s_{i_1},\ldots,s_{i_k})$ if and only if there exist $\alpha_1,\ldots,\alpha_k\geq 0$ (not all of them being $0$) such that $\alpha_1 s_{i_1} + \ldots+ \alpha_k s_{i_k} = 0$, or, equivalently,
$$
\alpha_1 (x_1+\ldots+x_{i_1}) + \alpha_2 (x_1+\ldots+x_{i_2}) + \ldots + \alpha_k (x_1+\ldots+x_{i_k}) = 0.
$$
After rearranging the terms, we can rewrite this condition as $\beta_1 x_1 + \ldots + \beta _n x_n = 0$,
where
\begin{align*}
&\beta_1 =\ldots = \beta_{i_1} := \alpha_1+\ldots+\alpha_k - b,\\
&\beta_{i_1+1} =\ldots = \beta_{i_2} := \alpha_2+\ldots+\alpha_k - b,\\
&\ldots,\\
&\beta_{i_{k-1}+1} =\ldots = \beta_{i_k} := \alpha_k -b,\\
&\beta_{i_k + 1} = \ldots = \beta_n :=-b,
\end{align*}
and $b\in\R$ can be arbitrary due to the assumption $x_1+\ldots+x_n = 0$. Choose  $b := \frac 1n (i_1\alpha_1 + \ldots + i_k \alpha_k)$, which ensures that $\beta_1+\ldots+\beta_n = 0$.
Our conditions on the $\alpha_i$'s translate into the following equivalent conditions on the $\beta_i$'s:
\begin{align}
&\beta_1 = \ldots = \beta_{i_1} \geq \beta_{i_1+1} =\ldots = \beta_{i_2} \geq \ldots \geq \beta_{i_{k-1}+1} = \ldots =\beta_{i_k} \geq \beta_{i_k+1} =\ldots = \beta_n, \label{eq:face_of_Weyl_chamber_Br1}\\
& \beta_1+\ldots+\beta_n = 0, \label{eq:face_of_Weyl_chamber_Br2}
\end{align}
where at least one inequality in~\eqref{eq:face_of_Weyl_chamber_Br1} should be strict, i.e.\ $(\beta_1,\ldots,\beta_n)\neq 0$.  That is, we have $0\in \conv(s_{i_1},\ldots,s_{i_k})$ if and only if $F\cap L_0 \cap \Ker A \neq \{0\}$, where $F\subset \R^n$ is the closed convex cone defined by~\eqref{eq:face_of_Weyl_chamber_Br1}.  Since any such $F$ is a $(k+1)$-dimensional face of the Weyl chamber $C^A$ and, conversely, any $(k+1)$-face has this form, we obtain the required statement.
\end{proof}

\begin{proof}[Proof of Theorem~\ref{theo:arcsine_multidim_Br} given Theorem~\ref{theo:intersect_weyl_chambers_Br}]
Let $A: \R^n \to \R^d$ be a random linear operator defined on the standard basis $e_1,\ldots,e_n$ of $\R^n$ by $A e_1 =\xi_1, \ldots, A e_n = \xi_n$.  By Lemma~\ref{lem:geometric_interpretation_Br},
$$
M_{n,k}^{(d)} =
\frac{1}{2} \sum_{1\leq i_1 < \ldots < i_k < n} \ind_{\{0\notin \conv(S_{i_1},\ldots, S_{i_k})\}}
=
\frac 12 \sum_{F\in \mathcal F_{k+1}(C^A)} \ind_{\{F \cap L_0 \cap \Ker A = \{0\}\}}
\quad   \text{a.s.},
$$
where we also used that $\# \mathcal F_{k+1}(C^A) = \binom{n-1}{k}$.
By the exchangeability assumption~$(\text{Ex})$, for every closed Weyl chamber $C_{\sigma}^A$,
$$
M_{n,k}^{(d)} \eqdistr \frac 12 \sum_{F\in \mathcal F_{k+1}(C_{\sigma}^A)} \ind_{\{F\cap L_0 \cap \Ker A = \{0\}\}}.
$$
Taking the expectation and then the mean over all $n!$ permutations $\sigma\in \Sym(n)$, we obtain
\begin{align*}
2\E M_{n,k}^{(d)}
&=
\E \left[\frac{1}{n!} \sum_{\sigma\in \Sym(n)}
\sum_{F\in \mathcal F_{k+1}(C_{\sigma}^A)} \ind_{\{F\cap L_0\cap \Ker A = \{0\}\}}\right] \\
&=
\frac 2 {(k+1)!} \binom  {n-1} {k}  \left(\stirling{k+1}{d} + \stirling{k+1}{d-2} +\ldots\right),
\end{align*}
where the second equality is by Theorem~\ref{theo:intersect_weyl_chambers_Br} applied to the linear subspace $L:= L_0\cap \Ker A$.
The fact that with probability one, the linear subspace $L_0\cap \Ker A$ is in general position with respect to the hyperplane arrangement $\mathcal A(A_{n-1})$ and the codimension of this subspace in $\R^n$ is $d+1$ is proved in~\cite[Lemma 6.2]{KVZ15}.
\end{proof}

\subsection{Proof of Theorem~\ref{theo:intersect_weyl_chambers_Br}}\label{subsec:proof_thm_arcsine_geometric_Br}
The proof is similar to that of Theorem~\ref{theo:intersect_weyl_chambers}, but several simplifications are possible.

\vspace*{2mm}
\noindent
\textbf{Enumeration of the $k$-faces of Weyl chambers of type $B_n$.} 
Again, we start with notation and examples. The $k$-dimensional faces of the Weyl chamber $C_{\sigma}^A$, see~\eqref{eq:C_sigma_def},  are enumerated by collections $1\leq i_1 < \ldots < i_{k-1} \leq n-1$  as follows:
\begin{multline}\label{eq:C_eps_sigma_face_Br}
C_{\sigma}^A(i_1,\ldots,i_{k-1}) :=
\{(\beta_1,\ldots,\beta_n)\in\R^n \colon
\beta_{\sigma(1)} = \ldots = \beta_{\sigma(i_1)}
\\\geq
\beta_{\sigma(i_1+1)} =\ldots = \beta_{\sigma(i_2)}
\geq
\ldots
\geq
\beta_{\sigma(i_{k-1}+1)} =\ldots = \beta_{\sigma(n)}\}.
\end{multline}
The next example shows  that the $k$-face $C_{\sigma}^A(i_1,\ldots,i_{k-1})$ may be a $k$-face of another Weyl chamber $C_{\sigma'}^A$ with some $\sigma' \neq \sigma$.
\begin{example}\label{ex:k_face_chamber_Br}
Consider the case $n=5$, $k=3$ and the convex cone given by the following set of conditions:
\begin{equation}\label{eq:example_face_Br}
\underbrace{\beta_2 =  \beta_4}_{\text{group } 1} \geq \underbrace{\beta_1 =  \beta_5}_{\text{group } 2} \geq \underbrace{\beta_3}_{\text{group } 3}.
\end{equation}
This cone is a $3$-dimensional face of the Weyl chamber
\begin{equation}\label{eq:example_chamber_Br}
\beta_2 \geq  \beta_4 \geq \beta_1 \geq  \beta_5 \geq \beta_3.
\end{equation}
However, it is also a $3$-face of
$$
\beta_4 \geq  \beta_2 \geq \beta_5 \geq  \beta_1 \geq \beta_3.
$$
and, more generally,  any of the chambers obtained from~\eqref{eq:example_chamber_Br} by permuting the $\beta_i$'s inside the groups $(\beta_2,\beta_4)$, $(\beta_1,\beta_5)$, $\beta_3$. The total number of such chambers is $2!2!1!$.
\end{example}
Next we shall introduce a notation for all $k$-faces of all Weyl chambers such that each face is counted exactly once.  Let $\mathcal R_{n,k}$ be the set of all partitions $I=(I_1,\ldots,I_{k})$ of the set $\{1,\ldots,n\}$ into $k$ disjoint non-empty distinguishable subsets $I_1,\ldots,I_k$.   Given a partition $I \in \mathcal R_{n,k}$, define the closed $k$-dimensional convex cone
\begin{align*}
Q_{I}
:=
\{(\beta_1,\ldots,\beta_n) \in\R^n \colon
\text{for all } 1\leq l_1\leq l_2 \leq k \text{ and } i_1\in I_{l_1}, i_2\in I_{l_2}  \text{ we have } \beta_{i_1} \geq \beta_{i_2}
\}.
\end{align*}
It follows from these conditions that for all $1\leq l\leq k$ and $i_1,i_2\in I_l$ we have $\beta_{i_1} =  \beta_{i_2}$.

\begin{example}
\label{ex:k_face_chamber_ctd_Br}
If $n=5$, $k=3$  and the partition $I$ is given by $I_1= \{2,4\}$, $I_2 = \{1,5\}$, $I_3=\{3\}$, then the cone $Q_{I}$ is given by the set of inequalities~\eqref{eq:example_face_Br}.
\end{example}

For a partition $I \in \mathcal R_{n,k}$, denote by $W_{I}$ the $k$-dimensional linear subspace of $\R^n$ spanned by $Q_i$, that is
\begin{align*}
W_{I}
:=
\{(\beta_1,\ldots,\beta_n) \in\R^n \colon
\text{for all } 1\leq l \leq k \text{ and } i_1, i_2\in I_{l}  \text{ we have } \beta_{i_1} = \beta_{i_2}
\}.
\end{align*}
Using $\gamma_l := \beta_i$, where $i\in I_l$ is arbitrary and $l=1,\ldots,k$, as coordinates on $W_{I}$ allows us to identify this linear space with $\R^k$. There is a natural decomposition of $W_{I}$ into $k!$ Weyl chambers of type $A_{k-1}$ of the form
$$
W_{I}(\tau)
=
\{
(\beta_1,\ldots,\beta_n) \in W_{I}\colon
\gamma_{\tau(1)} \geq \ldots \geq \gamma_{\tau(k)}
\},
$$
where  $\tau \in \Sym(k)$. One of these chambers, corresponding to the identity permutation $\tau(i)=i$ for all $1\leq i \leq k$,  is $Q_{I}$.
\begin{example}\label{ex:k_face_chamber_ctd_ctd_Br}
If the partition $I$ is the same as in Example~\ref{ex:k_face_chamber_ctd_Br}, then the linear space $W_{I}$ is given by the following set of conditions
\begin{equation}\label{eq:example_V_I_eta_Br}
\gamma_1:= \underbrace{\beta_2 =  \beta_4}_{\text{group } 1}\in\R, \;\;
\gamma_2:= \underbrace{\beta_1 =  \beta_5}_{\text{group } 2}\in\R, \;\;
\gamma_3:= \underbrace{\beta_3}_{\text{group } 3} \in\R.
\end{equation}
The spaces $W_{I}$ are not pairwise distinct. For example, the set of conditions~\eqref{eq:example_V_I_eta_Br} is  equivalent to the following one:
$$
\underbrace{\beta_3}_{\text{former group } 3} \in\R,\;\;
\underbrace{\beta_1 =  \beta_5}_{\text{former group } 2}\in\R, \;\;
\underbrace{\beta_2 =  \beta_4}_{\text{former group } 1}\in\R.
$$
More generally, we can interchange the $k$ groups of equal $\beta_i$'s in an arbitrary way, so that in the list $W_I$, $I\in \mathcal R_{n,k}$, each linear subspace appears $k!$ times under different names.
\end{example}

\begin{proof}[\bf{Proof of Theorem~\ref{theo:intersect_weyl_chambers_Br}}]
The cones $Q_{I}$, where $I\in \mathcal R_{n,k}$, are pairwise distinct and exhaust all $k$-dimensional faces of the Weyl chambers of type $A_{n-1}$.  The cone $Q_{I}$ belongs to $(\# I_1)!\ldots (\# I_{k})!$ different Weyl chambers
because in the definition of the Weyl chamber containing $Q_{I}$ we can impose an arbitrary ordering of the elements $\beta_i$, $i\in I_l$, for all $1\leq l\leq k$; see Example~\ref{ex:k_face_chamber_Br}.
It follows that
\begin{equation}\label{eq:wspom1_Br}
\sum_{\sigma\in \Sym(n)}
\sum_{F\in \mathcal F_k(C_{\sigma}^A)} \ind_{\{F\cap L = \{0\}\}}
=
\sum_{I \in \mathcal R_{n,k}} (\# I_1)!\ldots (\# I_{k})! \ind_{\{Q_{I}\cap L= \{0\}\}}.
\end{equation}

In the rest of the proof we compute the right-hand side of~\eqref{eq:wspom1_Br}. We may assume that $k\geq d+1$ since otherwise Theorem~\ref{theo:intersect_weyl_chambers_Br} is trivial.  Recall that the linear space $L$ has codimension $d$ in $\R^n$ and is in general position with respect to the reflection arrangement of type $A_{n-1}$ in $\R^n$. It follows from the definition of the general position, see~\eqref{eq:def_gen_pos}, that the linear subspace $L\cap W_{I}\subset W_{I}$ has codimension $d$ in $W_I$ and is in general position with respect to the reflection arrangement of type $A_{k-1}$ in $W_I$.
It follows from~\eqref{eq:L_cap_Q_Br}  applied to $L\cap W_{I}\subset W_{I}$  that
$$
\sum_{\tau \in \Sym(k)} \ind_{\{L \cap W_{I} (\tau) = \{0\}\}}
=
2 \left(\stirling{k}{d-1} + \stirling{k}{d-3} +\ldots\right).
$$
Multiplying this equality by $(\# I_1)!\ldots (\# I_{k})!$ and taking the sum over all partitions $I\in \mathcal R_{n,k}$, we obtain
\begin{multline}\label{eq:wspom_LHS_RHS_Br}
\sum_{I\in\mathcal R_{n,k}} \sum_{\tau \in \Sym(k)}  (\# I_1)!\ldots (\# I_{k})!  \ind_{\{L \cap W_{I} (\tau) = \{0\}\}}
\\=
2 \left(\stirling{k}{d-1} + \stirling{k}{d-3} +\ldots\right)
\sum_{I\in\mathcal R_{n,k}} (\# I_1)!\ldots (\# I_{k})!.
\end{multline}

Since any $k$-face $Q_I$ can be represented as $W_{I'}(\tau')$ in $k!$ ways, see Example~\ref{ex:k_face_chamber_ctd_ctd_Br}, and the sets  $I_1,\ldots, I_k$ are (up to their order) the same in all representations,
\begin{equation}\label{eq:wspom3_Br}
\text{LHS\eqref{eq:wspom_LHS_RHS_Br}} =
k! \sum_{I \in \mathcal R_{n,k}} (\# I_1)!\ldots (\# I_{k})! \ind_{\{Q_{I}\cap L= \{0\}\}}.
\end{equation}
Let us now compute the sum on the right-hand side of~\eqref{eq:wspom_LHS_RHS_Br}.
For $j_1,\ldots,j_k\in\N$ such that $j_1+\ldots+j_k = n$ denote by $\mathcal R_{n,k}(j_1,\ldots,j_k)$ the set of all partitions $I \in \mathcal R_{n,k}$ such that $\#I_1=j_1, \ldots, \#I_k = j_k$.  The number of elements in $\mathcal R_{n,k}(j_1,\ldots,j_k)$ is given by
$$
\# \mathcal R_{n,k} (j_1,\ldots,j_k) = \frac {n!}{j_1!\ldots j_{k}!} .
$$
It follows that
\begin{align}
\sum_{I\in\mathcal R_{n,k}} (\# I_1)!\ldots (\# I_{k})!
&=
\sum_{\substack{j_1,\ldots,j_k\in\N\\j_1+\ldots+j_k = n}}
\sum_{I \in\mathcal R_{n,k}(j_1,\ldots,j_k)} (\# I_1)!\ldots (\# I_{k})!  \notag\\
&=
\sum_{\substack{j_1,\ldots,j_k\in\N\\j_1+\ldots+j_k = n}} \frac {n!}{j_1!\ldots j_{k}!} \cdot  j_1!\ldots j_{k}!\notag\\
&= n! \binom {n-1}{k-1}. \label{eq:wspom2_Br}
\end{align}
Taking~\eqref{eq:wspom_LHS_RHS_Br}, \eqref{eq:wspom3_Br}, \eqref{eq:wspom2_Br}  together yields
$$
\sum_{I\in \mathcal R_{n,k}} (\# I_1)!\ldots (\# I_{k})!  \ind_{\{Q_{I}\cap L= \{0\}\}}
=
n!\binom {n-1}{k-1} \frac 2 {k!} \left(\stirling{k}{d-1} + \stirling{k}{d-3} +\ldots\right).
$$
In view of~\eqref{eq:wspom1_Br}, this completes the proof of Theorem~\ref{theo:intersect_weyl_chambers_Br}.
\end{proof}

\section{Proofs by reduction to non-absorption probability}\label{sec:proofs}
In this section we give alternative proofs of Theorems~\ref{theo:arcsine_multidim} and~\ref{theo:arcsine_multidim_Br}.

\subsection{Proof of Theorem~\ref{theo:arcsine_multidim}}\label{sec:proofs_B_n}
This proof rests on the following result obtained in~\cite{KVZ15}, which is the special case of Theorem~\ref{theo:arcsine_multidim} with $k=n$:
\begin{theorem}[\cite{KVZ15}]\label{theo:absorption}
Let $(S_i)_{i=1}^n$ be a random walk in $\R^d$ satisfying assumptions $(\pm\text{Ex})$ and $(\text{GP})$. Then
$$
\P[0\notin \conv (S_1,\ldots,S_n)] = \frac {2(B(n,d-1) + B(n, d-3)+\ldots)}{2^n n!}.
$$
\end{theorem}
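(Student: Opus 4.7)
The plan is to reduce the probabilistic identity to a deterministic geometric count via the kernel of the random operator built from the increments, using the symmetric-exchangeability assumption to restore full Weyl-group symmetry and then invoking Theorem~\ref{theo:intersect_weyl_chambers_known}. Introduce the random linear operator $A\colon\R^n\to\R^d$ with $Ae_i=\xi_i$ on the standard basis. The case $k=n$ of Lemma~\ref{lem:geometric_interpretation}, for which the only $n$-dimensional face of $C^B$ is $C^B$ itself, yields the almost sure equivalence $\{0\notin\conv(S_1,\ldots,S_n)\}=\{C^B\cap\Ker A=\{0\}\}$, so the task reduces to computing $\P[C^B\cap\Ker A=\{0\}]$.

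Next I would exploit $(\pm\text{Ex})$. For any $g=g_{\eps,\sigma}\in\mathcal G(B_n)$ one has $(A\circ g)e_i=\eps_i\xi_{\sigma(i)}$, so $A\circ g\eqdistr A$ as random operators and hence $\Ker A\eqdistr g^{-1}\Ker A$. Consequently $\P[C^B\cap\Ker A=\{0\}]=\P[gC^B\cap\Ker A=\{0\}]$ for every $g$, and since the $\mathcal G(B_n)$-orbit of $C^B$ consists of the $2^n n!$ closed Weyl chambers $C^B_{\eps,\sigma}$, averaging over the group produces
\begin{equation*}
\P[C^B\cap\Ker A=\{0\}]=\E\Biggl[\frac{1}{2^n n!}\sum_{\eps\in\{-1,+1\}^n}\sum_{\sigma\in\Sym(n)}\ind_{\{C^B_{\eps,\sigma}\cap\Ker A=\{0\}\}}\Biggr].
\end{equation*}

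Applying Theorem~\ref{theo:intersect_weyl_chambers_known} to the random subspace $L=\Ker A$, which almost surely has codimension $d$ and is in general position with respect to the reflection arrangement of type $B_n$, the inner sum is almost surely the deterministic constant $2\bigl(B(n,d-1)+B(n,d-3)+\cdots\bigr)$, so the stated formula follows immediately upon pulling this constant out of the expectation.

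The step I expect to demand the most care is the verification that $\Ker A$ is in general position with respect to every element of the lattice $\mathcal L(B_n)$ almost surely. By a union bound over the finitely many subspaces $K\in\mathcal L(B_n)$, this reduces to showing that every $d$-tuple of linear combinations of $\xi_1,\ldots,\xi_n$ with coefficients in $\{-1,0,+1\}$ arising from defining equations of such a $K$ is almost surely linearly independent, a property which follows from $(\text{GP})$ combined with $(\pm\text{Ex})$, as recorded in Remark~\ref{rem:iid} and Proposition~2.5 of~\cite{KVZ15}.
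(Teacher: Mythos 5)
Your proposal is correct and takes essentially the same route as the paper: although Theorem~\ref{theo:absorption} is simply quoted from~\cite{KVZ15}, your argument is precisely the $k=n$ specialization of the paper's proof of Theorem~\ref{theo:arcsine_multidim} given Theorem~\ref{theo:intersect_weyl_chambers} --- namely Lemma~\ref{lem:geometric_interpretation} with $k=n$ to identify $\{0\notin\conv(S_1,\ldots,S_n)\}$ with $\{C^B\cap\Ker A=\{0\}\}$, averaging over the group $\mathcal G(B_n)$ via $(\pm\text{Ex})$, and then Theorem~\ref{theo:intersect_weyl_chambers_known} applied to $L=\Ker A$. The one step deserving care, the almost sure general position of $\Ker A$ with respect to the reflection arrangement, is exactly what the paper outsources to \cite[Lemma~6.3]{KVZ15}, and your sketch of reducing it to $(\text{GP})$ through signed block sums is adequate.
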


We want to use this result to compute $\P[0\notin \conv(S_{i_1},\ldots,S_{i_k})]$ but we cannot apply it directly because the increments $\xi_1':=S_{i_1}, \dots, \xi_k':=S_{i_k} - S_{i_{k-1}}$ in general are not exchangeable. We restore the exchangeability by introducing an additional random reshuffling of the $\xi_i'$'s, which is the main idea of the following proof.
\begin{proof}[Proof of Theorem~\ref{theo:arcsine_multidim}]
Take some $1\leq i_1 < \ldots < i_k \leq n$ and let $i_0:=0$.
We subdivide the collection $(\xi_1,\ldots,\xi_n)$  into $k$ groups of lengths
$$
j_1:=i_1>0, \;\; j_2:=i_2-i_1>0,\;\; \ldots, \;\; j_k:=i_k-i_{k-1}>0
$$
and one group of length  $n-(j_1+\ldots+j_k)= n-i_k \geq 0$ as follows:
\begin{equation}\label{eq:groups_1}
\underbrace{(\xi_1,\ldots,\xi_{i_1})}_{\text{length $j_1$}},
\;\;
\underbrace{(\xi_{i_1+1},\ldots,\xi_{i_2})}_{\text{length $j_2$}},
\;\; \ldots,\;\;
\underbrace{(\xi_{i_{k-1} +1},\ldots, \xi_{i_k})}_{\text{length $j_k$}},
\;\;
 (\xi_{i_{k}+1},\ldots,\xi_n).
\end{equation}
The last, $(k+1)$-st group, will be mostly ignored in the sequel since the outcome of the corresponding terms in the definition of $M_{n,k}^{(d)}$ does not depend on these values.

Denote by $\xi_l'$ the sum of the $\xi_i$'s in the $l$-th group above, that is
$$
\xi_l' := \xi_{i_{l-1} +1} +\ldots + \xi_{i_l}, \quad l=1,\ldots, k.
$$
Let $\tau$ be a random permutation that is uniformly distributed on $\Sym(k)$ and independent of $(\xi_1,\ldots,\xi_n)$. We reshuffle the $\xi_i'$'s as follows:
$$
\eta_i:=\xi_{\tau(i)}', \quad i=1,\ldots, k.
$$
We claim that these $k$ random vectors satisfy the symmetric exchangeability assumption $(\pm\text{Ex})$ (with $k$ substituted for $n$). The exchangeability is by the construction and the symmetry follows by the symmetry of the joint distribution of $(\xi_1, \ldots, \xi_n)$. Let us give the details.

Let $\sigma_1$ be any permutation of the set $\{1,\ldots,k\}$ and let $\eps=(\eps_1,\ldots,\eps_k) \in \{-1,+1\}^k$ be a sequence of $\pm1$'s of length $k$. By the fact that $\tau \sigma_1 \eqdistr \tau$, we have the distributional identity
\begin{equation*}
(\eps_1 \eta_{\sigma_1(1)}, \ldots, \eps_k \eta_{\sigma_1(k)} ) = (\eps_1 \xi_{\tau(\sigma_1(1))}', \ldots, \eps_k \xi_{\tau(\sigma_1(k))}')
\eqdistr (\eps_1 \eta_1, \ldots, \eps_k \eta_k ),
\end{equation*}
from which we deduce
\begin{equation}\label{eq:eta_symm_exchang}
\P[(\eps_1 \eta_{\sigma_1(1)}, \ldots, \eps_k \eta_{\sigma_1(k)} ) \in \cdot] = \frac{1}{k!} \sum_{\sigma \in \Sym(k)} \P[(\eps_1 \xi_{\sigma(1)}', \ldots, \eps_k \xi_{\sigma(k)}' ) \in \cdot \, ].
\end{equation}

Let $\sigma \in \Sym(k)$ be any permutation of length $k$. Permute the first $k$ groups of the list \eqref{eq:groups_1} according to $\sigma$ and then multiply the reordered groups by $\eps_1,\ldots,\eps_k$, respectively. The $(k+1)$-st group stays unchanged. Clearly, our symmetric exchangeability assumption $(\pm\text{Ex})$ on $(\xi_1,\ldots,\xi_n)$ implies that this operation does not change the distribution. That is, ignoring the last group, we have the  distributional equality
$$
(\xi_1,\ldots,\xi_{i_k})
\eqdistr
(\underbrace{\eps_1 \xi_{i_{\sigma(1)-1} + 1},\ldots, \eps_1 \xi_{i_{\sigma(1)}}}_{\text{length $j_{\sigma(1)}$}},
\underbrace{\eps_2 \xi_{i_{\sigma(2)-1} + 1},\ldots, \eps_2 \xi_{i_{\sigma(2)}}}_{\text{length $j_{\sigma(2)}$}},
\ldots,
\underbrace{\eps_k \xi_{i_{\sigma(k)-1} + 1},\ldots, \eps_k \xi_{i_{\sigma(k)}}}_{\text{length $j_{\sigma(k)}$}}).
$$
Since the sum of random vectors in the $l$-th group is $\eps_l \xi_{\sigma(l)}'$, we see that the distributions under the sum in \eqref{eq:eta_symm_exchang} do not depend on $\eps_1, \ldots, \eps_k$. This proves the stated symmetric exchangeability of $(\eta_1, \ldots, \eta_k)$. This also gives the distributional identity
\begin{equation}
\label{eq: distr =}
(S_{j_{\sigma(1)}},S_{j_{\sigma(1)}+j_{\sigma(2)}},\ldots,S_{j_{\sigma(1)}+\ldots+j_{\sigma(k)}}) \eqdistr (\xi_{\sigma(1)}', \xi_{\sigma(1)}' + \xi_{\sigma(2)}', \ldots, \xi_{\sigma(1)}' + \ldots + \xi_{\sigma(k)}' ).
\end{equation}

Now introduce the new random walk
$$T_i:= \eta_1 + \ldots +\eta_i,  \quad i=1,\ldots, k.$$
From \eqref{eq: distr =} it follows that the random walk $(T_i)_{i=1}^k$ satisfies the general position assumption $(\text{GP})$ (with $k$ substituted for $n$) since $(S_i)_{i=1}^n$ satisfies $(\text{GP})$. Then Theorem~\ref{theo:absorption} applies to $(T_i)_{i=1}^k$, and by taking the mean over all $\sigma \in \Sym(k)$ in \eqref{eq: distr =}, we obtain
\begin{multline}
\label{eq: symmetrized}
\frac 1 {k!} \sum_{\sigma \in \Sym(k)}
\P[0\notin \conv (S_{j_{\sigma(1)}},S_{j_{\sigma(1)}+j_{\sigma(2)}},\ldots,S_{j_{\sigma(1)}+\ldots+j_{\sigma(k)}})]
\\
\begin{aligned}
&= \P[0\notin \conv (T_1, \ldots, T_k)] \\
&=\frac {2(B(k, d-1) + B(k, d-3) +\ldots)} {2^k k!}.
\end{aligned}
\end{multline}

Finally, sum the equations above over all tuples $(j_1,\ldots,j_k)$ from the following set:
$$
J_{n,k} := \{(j_1,\ldots,j_k)\in \N^k\colon j_1+\ldots+j_k\leq n\}.
$$
Since the cardinality of $J_{n,k}$ is $\binom nk$ and the last expression in \eqref{eq: symmetrized} does not depend on $(j_1,\ldots,j_k)$, we obtain
\begin{multline*}
\frac {2} {2^k k!}\binom nk (B(k, d-1) + B(k, d-3) +\ldots)
\\
\begin{aligned}
&=
\frac 1 {k!} \sum_{\sigma \in \Sym(k)}  \sum_{(j_1,\ldots,j_k) \in J_{n,k}}
\P[0\notin \conv (S_{j_{\sigma(1)}},S_{j_{\sigma(1)}+j_{\sigma(2)}},\ldots,S_{j_{\sigma(1)}+\ldots+j_{\sigma(k)}})]\\
&=
\sum_{(j_1,\ldots,j_k) \in J_{n,k}}
\P[0\notin \conv (S_{j_{1}},S_{j_{1}+j_{2}},\ldots,S_{j_{1}+\ldots+j_{k}})]\\
&=\sum_{1\leq i_1<\ldots<i_k\leq n}
\P[0\notin \conv (S_{i_{1}},S_{i_{2}},\ldots,S_{i_{k}})].
\end{aligned}
\end{multline*}
To complete the proof, observe that the right-hand side is nothing but $2 \E M_{n,k}^{(d)}$; see~\eqref{eq:def_M_n_k}.
\end{proof}

\subsection{Proof of Theorem~\ref{theo:arcsine_multidim_Br}}
The proof is based on the following result obtained in~\cite{KVZ15}:
\begin{theorem}[\cite{KVZ15}]\label{theo:absorption_Br}
Let $(S_i)_{i=1}^n$, $n\geq 2$,  be a random bridge in $\R^d$ satisfying assumptions $(\text{Br})$, $(\text{Ex})$, $(\text{GP}')$. Then,
$$
\P[0\notin \conv (S_1,\ldots,S_{n-1})] = \frac 2 {n!}  \left(\stirling{n}{d} + \stirling{n}{d-2} +\ldots\right).
$$
\end{theorem}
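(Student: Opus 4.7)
The plan is to derive Theorem~\ref{theo:absorption_Br} from the geometric machinery already set up in Section~\ref{sec:subspaces_intersect_Weyl_chambers}. The key observation is that, once specialized to $k=n-1$, Lemma~\ref{lem:geometric_interpretation_Br} converts the absorption event into a statement about a single Weyl chamber, which I would then symmetrize via the exchangeability assumption and plug into the recorded identity~\eqref{eq:L_cap_Q_Br}.

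First I would introduce the random linear operator $A\colon \R^n \to \R^d$ defined by $A e_i := \xi_i$. With $k=n-1$, Lemma~\ref{lem:geometric_interpretation_Br} leaves only the single index collection $(1,\ldots,n-1)$ on the probabilistic side and the unique $n$-dimensional face $C^A$ on the geometric side, yielding the equivalence
$$
\{0\in \conv(S_1,\ldots,S_{n-1})\} \;=\; \{C^A \cap L_0 \cap \Ker A \neq \{0\}\}.
$$
Next I would invoke $(\text{Ex})$: if $P_\sigma$ is the permutation matrix of $\sigma \in \Sym(n)$, then $A \circ P_\sigma$ has the same law as $A$, so $\Ker A \eqdistr P_\sigma^{-1}(\Ker A)$. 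Since $L_0$ is invariant under coordinate permutations and $P_\sigma^{-1}(C^A)$ is some other chamber $C_\tau^A$, the probability on the right is unchanged when $C^A$ is replaced by any $C_\sigma^A$. Averaging the indicator of the complementary event over all $\sigma\in\Sym(n)$ then gives
$$
\P[0\notin \conv(S_1,\ldots,S_{n-1})] = \E \Biggl[\frac{1}{n!} \sum_{\sigma\in \Sym(n)} \ind_{\{C_\sigma^A \cap L = \{0\}\}} \Biggr], \qquad L := L_0 \cap \Ker A.
$$

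The last step is to apply identity~\eqref{eq:L_cap_Q_Br} to $L$: since $L$ has codimension $d+1$ in $\R^n$ almost surely (the extra $+1$ coming from the hyperplane $L_0$, which is independent of the $d$ almost-sure constraints imposed by $\Ker A$), the identity with $d$ replaced by $d+1$ yields exactly $\frac{2}{n!}\bigl(\stirling{n}{d}+\stirling{n}{d-2}+\cdots\bigr)$, as required. The main obstacle I foresee is verifying that $L_0 \cap \Ker A$ almost surely satisfies the general-position condition~\eqref{eq:def_gen_pos} with respect to the reflection arrangement $\mathcal A(A_{n-1})$; this must be checked against every element of the lattice $\mathcal L(A_{n-1})$, and this is precisely where assumption $(\text{GP}')$ enters. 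Once this technical point is established (as in~\cite[Lemma 6.2]{KVZ15}), the remaining steps are routine.
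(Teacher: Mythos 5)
Your proposal is correct, and it is essentially the route the paper itself takes: the paper only cites this theorem from \cite{KVZ15} without reproving it, but your argument is precisely the $k=n-1$ specialization of the proof of Theorem~\ref{theo:arcsine_multidim_Br} given in Section~\ref{017} (Lemma~\ref{lem:geometric_interpretation_Br} to reduce to the single top-dimensional face $C^A$, symmetrization over $\Sym(n)$ via $(\text{Ex})$, then the deterministic identity~\eqref{eq:L_cap_Q_Br} applied to $L=L_0\cap\Ker A$ with codimension $d+1$, with general position supplied by \cite[Lemma 6.2]{KVZ15}). The derivation is non-circular since \eqref{eq:L_cap_Q_Br} is a purely geometric statement about a deterministic subspace, logically independent of the probabilistic absorption formula being proved.
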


\begin{proof}[Proof of Theorem~\ref{theo:arcsine_multidim_Br}]
Take some $1\leq i_1 < \ldots < i_k \leq n-1$ and put $i_0:=0$, $i_{k+1}:=n$.  We subdivide the collection $(\xi_1,\ldots,\xi_n)$  into $k+1$ groups of lengths
$$
j_1:=i_1>0, \;\; j_2:=i_2-i_1>0,\;\; \ldots, \;\; j_{k+1}:=n-i_{k}>0
$$
as follows:
\begin{equation}\label{eq:groups_2}
\underbrace{(\xi_1,\ldots,\xi_{i_1})}_{\text{length $j_1$}},
\;\;
\underbrace{(\xi_{i_1+1},\ldots,\xi_{i_2})}_{\text{length $j_2$}},
\;\; \ldots,\;\;
\underbrace{(\xi_{i_{k}+1},\ldots,\xi_n)}_{\text{length $j_{k+1}$}}.
\end{equation}

Denote by $\xi_l'$ the sum of the $\xi_i$'s in the $l$-th group above, that is
$$
\xi_l' := \xi_{i_{l-1} +1} +\ldots + \xi_{i_l}, \quad l=1,\ldots, k+1.
$$
Let $\tau$ be a random permutation that is uniformly distributed on $\Sym(k+1)$ and independent with $(\xi_1,\ldots,\xi_n)$. We reshuffle the $\xi_i'$'s as follows:
$$
\eta_i:=\xi_{\tau(i)}', \quad i=1,\ldots, k+1.
$$
By the construction, the $k+1$ random vectors $\eta_i$ satisfy the exchangeability assumtpion $(\text{Ex})$ (with $k+1$ substituted for $n$).

Take any permutation $\sigma$  of the set $\{1,\ldots,k+1\}$ and
permute the $k+1$ groups of the above list \eqref{eq:groups_2} according to $\sigma$.
The exchangeability assumption $(\text{Ex})$  implies that this does not change the distribution of $(\xi_1,\ldots,\xi_n)$, that is
$$
(\xi_1,\ldots,\xi_{n})
\eqdistr
(\underbrace{\xi_{i_{\sigma(1)-1} + 1},\ldots, \xi_{i_{\sigma(1)}}}_{\text{length $j_{\sigma(1)}$}},
\underbrace{\xi_{i_{\sigma(2)-1} + 1},\ldots, \xi_{i_{\sigma(2)}}}_{\text{length $j_{\sigma(2)}$}},
\ldots,
\underbrace{\xi_{i_{\sigma(k+1)-1} + 1},\ldots, \xi_{i_{\sigma(k+1)}}}_{\text{length $j_{\sigma(k+1)}$}}).
$$
The sum of random vectors in the $l$-th group is $\xi_{\sigma(l)}'$, which gives the distributional identity
\begin{equation}
\label{eq: distr = 2}
(S_{j_{\sigma(1)}},S_{j_{\sigma(1)}+j_{\sigma(2)}},\ldots,S_{j_{\sigma(1)}+\ldots+j_{\sigma(k+1)}}) \eqdistr (\xi_{\sigma(1)}', \xi_{\sigma(1)}' + \xi_{\sigma(2)}', \ldots, \xi_{\sigma(1)}' + \ldots + \xi_{\sigma(k+1)}' ),
\end{equation}
which is analogous to \eqref{eq: distr =}. Note that the last vectors in both sides equal zero a.s. since $j_1 + \ldots + j_{k+1}=n$ and $S_n=0$ a.s.

Now introduce the partial sums
$$T_i:= \eta_1 + \ldots +\eta_i,  \quad i=1,\ldots, k+1.$$
It follows from \eqref{eq: distr = 2} that $T_{k+1}=S_n=0$ a.s. and the random bridge $(T_i)_{i=1}^{k+1}$ satisfies the general position assumption  $(\text{GP}')$ (with $k+1$ substituted for $n$) since  the random bridge $(S_i)_{i=1}^n$ satisfies $(\text{GP}')$. Applying Theorem~\ref{theo:absorption_Br} to $(T_i)_{i=1}^{k+1}$, we obtain
\begin{multline}
\label{eq: symmetrized2}
\frac 1 {(k+1)!} \sum_{\sigma \in \Sym(k+1)}
\P[0\notin \conv (S_{j_{\sigma(1)}},S_{j_{\sigma(1)}+j_{\sigma(2)}},\ldots,S_{j_{\sigma(1)}+\ldots+j_{\sigma(k)}})]
\\
\begin{aligned}
&= \P[0\notin \conv (T_1, \ldots, T_k)] \\
&= \frac 2 {(k+1)!}  \left(\stirling{k+1}{d} + \stirling{k+1}{d-2} +\ldots\right).
\end{aligned}
\end{multline}

Now we take the sum over all $(j_1,\ldots,j_{k+1})$ from the set
$$
J_{n,k}^* := \{(j_1,\ldots,j_{k+1})\in \N^{k+1}\colon j_1+\ldots+j_{k+1} = n\}.
$$
Since the cardinality of $J_{n,k}^*$ is $\binom {n-1}k$ and the last expression in \eqref{eq: symmetrized2} does not depend on $(j_1,\ldots,j_{k+1})$, we obtain
\begin{multline*}
\binom {n-1}{k} \frac 2 {(k+1)!}  \left(\stirling{k+1}{d} + \stirling{k+1}{d-2} +\ldots\right)
\\
\begin{aligned}
&=
\frac 1 {(k+1)!} \sum_{\sigma \in \Sym(k+1)}  \sum_{(j_1,\ldots,j_{k+1}) \in J_{n,k}^*}
\P[0\notin \conv (S_{j_{\sigma(1)}},S_{j_{\sigma(1)}+j_{\sigma(2)}},\ldots,S_{j_{\sigma(1)}+\ldots+j_{\sigma(k)}})]\\
&=
\sum_{(j_1,\ldots,j_{k+1}) \in J_{n,k}^*}
\P[0\notin \conv (S_{j_{1}},S_{j_{1}+j_{2}},\ldots,S_{j_{1}+\ldots+j_{k}})]\\
&=\sum_{1\leq i_1<\ldots<i_k\leq n-1}
\P[0\notin \conv (S_{i_{1}},S_{i_{2}},\ldots,S_{i_{k}})].
\end{aligned}
\end{multline*}
To complete the proof, observe that the right-hand side is nothing but $2 \E M_{n,k}^{(d)}$; see~\eqref{eq:def_M_n_k_Br}.
\end{proof}

\section*{Acknowledgments}
We would like to thank the referee for his/her stimulating comments and suggestions.

\bibliography{arcsine_multidim_bib}
\bibliographystyle{plain}

\end{document}